\definecolor{myblue}{rgb}{.8, .8, 1}
  \newcommand*\mybluebox[1]{
    \colorbox{myblue}{\hspace{1em}#1\hspace{1em}}}
\crefname{equation}{}{}
\crefname{chapter}{Chapter}{Chapters}
\crefname{item}{item}{items}
\crefname{figure}{Figure}{Figures}
\crefname{theorem}{Theorem}{Theorems}
\crefname{lemma}{Lemma}{Lemmas}
\crefname{proposition}{Proposition}{Propositions}
\crefname{corollary}{Corollary}{Corollarys}
\crefname{definition}{Definition}{Definitions}
\crefname{fact}{Fact}{Facts}
\crefname{example}{Example}{Examples}
\crefname{algorithm}{Algorithm}{Algorithms}
\crefname{remark}{Remark}{Remarks}
\crefname{note}{Note}{Notes}
\crefname{notation}{Notation}{Notations}
\crefname{case}{Case}{Cases}
\crefname{exercise}{Exercise}{Exercises}
\crefname{question}{Question}{Questions}
\crefname{claim}{Claim}{Claims}
\crefname{enumi}{}{}
\numberwithin{equation}{section}
\theoremstyle{plain}
\newtheorem{theorem}{Theorem}[section]
\newtheorem{corollary}[theorem]{Corollary}
\newtheorem{fact}[theorem]{Fact}
\newtheorem{lemma}[theorem]{Lemma}
\theoremstyle{definition}
\newtheorem{definition}[theorem]{Definition}
\newtheorem{example}[theorem]{Example}
\newtheorem{remark}[theorem]{Remark}
\newcommand{\argmin}{\ensuremath{\operatorname{argmin}}}
\newcommand{\inte}{\ensuremath{\operatorname{int}}}
\newcommand{\bd}{\ensuremath{\operatorname{bd}}}
\newcommand{\aff}{\ensuremath{\operatorname{aff} \,}}
\newcommand{\weakly}{\ensuremath{{\;\operatorname{\rightharpoonup}\;}}}
\newcommand{\dom}{\ensuremath{\operatorname{dom}}}
\newcommand{\Fix}{\ensuremath{\operatorname{Fix}}}
\newcommand{\Id}{\ensuremath{\operatorname{Id}}}
\newcommand{\C}{\ensuremath{\operatorname{C}}}
\newcommand{\D}{\ensuremath{\operatorname{D}}}
\newcommand{\Pro}{\ensuremath{\operatorname{P}}}
\newcommand{\I}{\ensuremath{\operatorname{I}}}
\newcommand{\J}{\ensuremath{\operatorname{J}}}
\newcommand{\CCO}[1]{CC{#1}}
\providecommand{\norm}[1]{\left\lVert#1\right\rVert}
\providecommand{\innp}[1]{\left\langle#1\right\rangle}
\providecommand{\Innp}[1]{\Big\langle#1\Big\rangle}
\begin{document}

\title{ 
	Bregman Circumcenters: Monotonicity and Forward Weak Convergence
}

\author{
         Hui\ Ouyang\thanks{
                 Mathematics, University of British Columbia, Kelowna, B.C.\ V1V~1V7, Canada.
                 E-mail: \href{mailto:hui.ouyang@alumni.ubc.ca}{\texttt{hui.ouyang@alumni.ubc.ca}}.}
                 }

\date{March 26, 2022}
\maketitle

\begin{abstract}
	\noindent
	Recently, we systematically studied the basic theory of Bregman circumcenters in another paper.  In this work, we aim to apply Bregman circumcenters to optimization algorithms.
	 
	Here, we propose the forward Bregman monotonicity which is a generalization of the powerful Fej\'er monotonicity  and show a weak convergence result of the forward Bregman monotone sequence. We also naturally introduce the Bregman  circumcenter mappings associated with a finite set of operators. Then we provide sufficient conditions for the sequence of iterations of the forward Bregman circumcenter mapping to be forward Bregman monotone.
	Furthermore,   we prove that  the sequence of iterations of the forward Bregman circumcenter mapping weakly converges to a point in the intersection of the fixed point sets of relevant operators, which reduces to the known weak convergence result of the circumcentered method under the Euclidean distance. In addition,  particular examples are provided to illustrate the Bregman isometry and Browder's demiclosedness principle, and our convergence result. 
\end{abstract}
	
	{\small
		\noindent
		{\bfseries 2020 Mathematics Subject Classification:}
		{
			Primary 90C48, 49M37, 47H05;  
			Secondary 90C25,  65K05, 52A41.
		}
		
		\noindent{\bfseries Keywords:}
		Bregman distance, Legendre function, 
 backward Bregman (pseudo)-circumcenter, 
  forward Bregman (pseudo)-circumcenter, 
		  fixed point set, forward Bregman   monotonicity, convergence, Bregman circumcenter method.
	}
\section{Introduction}
Throughout the work, we assume that $\mathbb{N}=\{0,1,2,\ldots\}$ and $\{ m,  n \} \subseteq  \mathbb{N} \smallsetminus \{0\}$, and that 
\begin{empheq}[box=\mybluebox]{equation*}
\mathcal{H}    \text{ is a real Hilbert space  with inner product } \innp{\cdot, \cdot} \text{ and induced norm } \norm{\cdot}.	
\end{empheq}

Projection methods, including the method of cyclic projections based on  Bregman distances, are employed in many applications (see, e.g., \cite{BBC2001}, \cite{BBC2003}, \cite{BC2003}, and \cite{Bregman1967} and the references therein). The convergence rate of certain circumcentered methods is no worse than  that of the method of cyclic projections under the Euclidean distance, and, in some cases, some circumcentered methods converge much faster than the method of cyclic projections, for solving the best approximation problem   or the feasibility problem (see, \cite{AABBIS2021}, \cite{BBCS2020CRMbetter}, \cite{BOyW2019Isometry}, \cite{BOyW2019LinearConvergence}, \cite{BOyW2020BAM}, \cite{BBCS2017}, \cite{BBCS2018}, \cite{BBCS2019}, \cite{BBCS2020ConvexFeasibility},   and \cite{Ouyang2020Finite} for details). 

In our recent work \cite{BOyW2021BregmanCircumBasicTheory}, we presented multiple interesting theoretical results on the Bregman circumcenter.  Various examples under general Bregman distances were also provided to illustrate our main results. 

In this work, our objects are to introduce the circumcenter mappings and methods and investigate the convergence of circumcenter methods under general Bregman distances.

We present the main results in this work below. 
\begin{itemize}
	\item[\textbf{R1:}]  \cref{theorem:ConvBregBackwMonot} characterizes the weak convergence of the  forward Bregman monotone sequence.

	\item[\textbf{R2:}]   \Cref{theorem:Convergence,theorem:xkConvergence} show that  the sequence of iterations of the forward Bregman circumcenter mapping  convergences weakly to a point in the intersection of the fixed point sets of   operators   inducing the mapping. 
\end{itemize}

The remainder of the work is organized as follows. 
In  \cref{sec:Preliminaries}, we collect fundamental  definitions and facts. In \cref{sec:Monotonicity}, we introduce the forward Bregman monotonicity and investigate the weak convergence of the  forward Bregman monotone sequence.
In \cref{sec:Bregmancircumcentermethods}, we introduce the forward Bregman circumcenter mapping  induced by a finite set of operators 
and   specify sufficient conditions for the weak convergence of the forward Bregman circumcenter method to a point in the intersection of fixed point sets of the related operators. Moreover, we provide particular examples to illuminate our hypotheses and main results under general Bregman distances. 

We now turn to the notation used in this paper.  $\Gamma_{0} (\mathcal{H}) $ is the set of proper closed convex functions from $\mathcal{H}$ to $\left]-\infty, +\infty\right]$.
Let $f:\mathcal{H} \to \left]-\infty, +\infty\right]$ be proper.  The \emph{domain}  (\emph{conjugate function,   gradient, subgradient},  respectively) of $f$ is denoted by $\dom f$ ($f^{*}$,    $\nabla f$, $\partial f$,   respectively).   We say $f$ is \emph{coercive} if $\lim_{\norm{x} \to +\infty} f(x) = +\infty$.
 Let $C$ be a nonempty subset of $\mathcal{H}$.  Its \emph{interior} and \emph{boundary} are abbreviated by $\inte C$  and $\bd C$, respectively.  
 $C$ is an \emph{affine subspace} of
 $\mathcal{H}$ if $C \neq \varnothing$ and $(\forall \rho\in\mathbb{R})$ $\rho
 C + (1-\rho)C = C$. The smallest affine subspace of $\mathcal{H}$ containing $C$ is
  denoted by $\aff C$ and called the \emph{affine hull} of $C$. 
 The \emph{best approximation operator} (or \emph{projector}) onto $C$ under the Euclidean distance is denoted by $\Pro_{C}$, that is, $(\forall x \in \mathcal{H} )$ $\Pro_{C}x :=  \argmin_{y \in C} \norm{x-y}$. $\iota_{C}$ is the \emph{indicator function of $C$}, that is, $(\forall x \in C)$ $\iota_{C} (x) =0$ and $(\forall x \in \mathcal{H} \smallsetminus C)$ $\iota_{C} (x) =+ \infty$. 
 $\Id$ stands for the \emph{identity mapping}. 
 For every $x \in \mathcal{H}$ and $\delta \in \mathbb{R}_{++}$,  $B [x; \delta] $ is the \emph{closed ball with center at $x$ and with radius $\delta$}. Let $A : \mathcal{H} \to 2^{\mathcal{H}}$ and let $x \in \mathcal{H}$. Then $A$ is \emph{locally bounded at $x$} if there exists $\delta \in \mathbb{R}_{++}$ such that $A(B[x;\delta] )$ is bounded.  Denote by $\Fix A := \{x \in \mathcal{H} ~:~ x \in A(x) \}$.
For other notation not explicitly defined here, we refer the reader to \cite{BC2017}.

\section{Bregman distances and projections} \label{sec:Preliminaries}
In this section, we  collect some essential definitions and facts to be used subsequently.

It is clear that if $f = \frac{1}{2} \norm{\cdot}^{2}$ in the following  \cref{defn:BregmanDistance}, we recover the  Euclidean distance $\D :   \mathcal{H} \times \mathcal{H} \to \left[0, +\infty\right] :  (x,y) \mapsto \frac{1}{2} \norm{x-y}^{2}$.
\begin{definition} {\rm \cite[Definitions~7.1 and 7.7]{BBC2001}} \label{defn:BregmanDistance}
	Suppose that $f \in \Gamma_{0} (\mathcal{H}) $  with $\inte \dom f \neq \varnothing$ and that $f$ is G\^ateaux differentiable  on $\inte \dom f$. The  \emph{Bregman distance $\D_{f}$ associated with $f$} is defined by
	\begin{align*}
\D_{f}: \mathcal{H} \times \mathcal{H} \to \left[0, +\infty\right] : (x,y) \mapsto \begin{cases}
f(x) -f(y) -\innp{\nabla f(y), x-y}, \quad &\text{if } y \in  \inte \dom f;\\
+\infty,  \quad &\text{otherwise}.
\end{cases} 
\end{align*}
	Moreover, let $C$ be a nonempty subset of $\mathcal{H}$. For every $(x,y) \in \dom f \times \inte \dom f$, define the \emph{backward Bregman projection} (or simply  \emph{Bregman projection}) of $y$ onto $C$ and  \emph{forward  Bregman projection} of $x$ onto $C$,  respectively, as 
\begin{align*}
&\overleftarrow{\Pro}^{f}_{C}(y):= \left\{ u \in C \cap \dom f ~:~  (\forall c \in \C) \D_{f} \left( u,y \right) \leq \D_{f} (c,y)   \right\}, \text{and}\\
&\overrightarrow{\Pro}^{f}_{C}(x) := \left\{  v \in C \cap \inte \dom f ~:~ (\forall c \in \C) \D_{f} \left( x,v \right) \leq \D_{f} (x, c)  \right\}.
\end{align*} 
Abusing notation slightly, we shall write 	$\overleftarrow{\Pro}^{f}_{C}(y) =u$ and $\overrightarrow{\Pro}^{f}_{C}(x)=v$, if $\overleftarrow{\Pro}^{f}_{C}(y)$ and $\overrightarrow{\Pro}^{f}_{C}(x)$ happen to be the singletons $\overleftarrow{\Pro}^{f}_{C}(y) =\{ u\}$  and $\overrightarrow{\Pro}^{f}_{C}(x) =\{v\} $, respectively.  
\end{definition}

The following definitions are compatible with their classical counterparts as  \cite[Definitions~2.1, 2.3 and 2.8]{BB1997Legendre} in the finite-dimensional Euclidean space (see, \cite[Theorem~5.11]{BBC2001} for more details). 
\begin{definition} {\rm \cite[Definition~5.2 and Theorem~5.6]{BBC2001}} \label{def:Legendre}
	Suppose  that $f \in \Gamma_{0} (\mathcal{H}) $.  We say $f$ is:
	\begin{enumerate}
		\item  \emph{essentially smooth}, if
$\dom \partial f =\inte \dom f \neq \varnothing$, $f$ is G\^ateaux differentiable on $\inte \dom f$, and $\norm{\nabla f (x_{k})} \to +\infty$, for every sequence $(x_{k})_{k \in \mathbb{N}}$ in $\inte \dom f$ converging to some point in $\bd \dom f$.
		\item  \label{def:Legendre:convex} \emph{essentially strictly convex}, if $(\partial f)^{-1}$ is locally bounded on its domain and $f$ is strictly convex on every convex subset of $\dom \partial f$.
		\item \emph{Legendre} (or a \emph{Legendre function} or a \emph{convex function of Legendre type}),  if $f$ is both essentially smooth and essentially strictly convex.
	\end{enumerate}
\end{definition}

\begin{fact}{\rm \cite[Lemma~7.3]{BBC2001}}  \label{fact:PropertD}
	Suppose that $f \in \Gamma_{0} (\mathcal{H}) $  with $\inte \dom f \neq \varnothing$, that $f$ is G\^ateaux differentiable  on $\inte \dom f$, and that $f$ is essentially strictly convex. Let $x \in \mathcal{H}$ and $y \in \inte \dom f$. Then $D_{f}(\cdot, y)$ is coercive. Moreover, $D_{f}(x, y) =0 \Leftrightarrow x =y$.
\end{fact}

 The following \cref{fact:Pleft,fact:charac:PrightCf}   on the existence and uniqueness of backward and forward Bregman projections
  are fundamental to some results in \cref{sec:Bregmancircumcentermethods} below. 
\begin{fact} {\rm \cite[Corollary~7.9]{BBC2001}} \label{fact:Pleft}
	Suppose  that $f \in \Gamma_{0} (\mathcal{H}) $   is Legendre, that $C$ is a closed convex subset of $\mathcal{H}$ with $C \cap \inte \dom f \neq \varnothing$, and that $y \in \inte \dom f $. Then 
	$\overleftarrow{\Pro}^{f}_{C}(y) $  is a singleton contained in $C \cap \inte \dom f$.
\end{fact}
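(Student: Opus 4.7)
The plan is to establish three things: existence of a minimizer of $\D_{f}(\cdot,y)$ over $C\cap\dom f$; that every minimizer lies in $\inte\dom f$; and uniqueness.

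For existence, set $\phi:=\D_{f}(\cdot,y)+\iota_{C}$. This is a proper lower semicontinuous convex function, properness being ensured by $C\cap\inte\dom f\neq\varnothing$. Since $f$ is Legendre (hence essentially strictly convex), $\D_{f}(\cdot,y)$ is coercive by \cref{fact:PropertD}, so $\phi$ is coercive. Coercivity plus weak lower semicontinuity on the Hilbert space $\mathcal{H}$ forces the sublevel sets to be weakly compact, so $\phi$ attains its infimum.

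For the interior claim, let $u$ be a minimizer and suppose toward contradiction that $u\in\bd\dom f\cap\dom f$. Pick $c_{0}\in C\cap\inte\dom f$; for $t\in(0,1]$ the point $x_{t}:=(1-t)u+tc_{0}$ lies in $C\cap\inte\dom f$ by convexity. Optimality of $u$ on $C$ yields a nonnegative right directional derivative
\begin{align*}
0\;\le\;\lim_{t\to 0^{+}}\frac{\D_{f}(x_{t},y)-\D_{f}(u,y)}{t}\;=\;f'(u;c_{0}-u)-\innp{\nabla f(y),c_{0}-u},
\end{align*}
so $f'(u;c_{0}-u)>-\infty$. The main obstacle is the auxiliary lemma that essential smoothness forces $f'(u;c_{0}-u)=-\infty$ whenever $u\in\bd\dom f\cap\dom f$ and $c_{0}\in\inte\dom f$. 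I would prove this from the two requirements of essential smoothness, namely $\dom\partial f=\inte\dom f$ (so $\partial f(u)=\varnothing$) and $\|\nabla f(x_{t})\|\to\infty$: the convexity estimate $\innp{\nabla f(x_{t}),c_{0}-u}\ge(f(x_{t})-f(u))/t$ together with the companion upper bound $\innp{\nabla f(x_{t}),c_{0}-u}\le(f(c_{0})-f(x_{t}))/(1-t)$ would, under a finite directional derivative, keep $\innp{\nabla f(x_{t}),c_{0}-u}$ bounded along the segment, and a standard weak-limit argument would then produce a subgradient of $f$ at $u$, contradicting $\partial f(u)=\varnothing$. This rules out the boundary case and forces $u\in\inte\dom f$.

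For uniqueness, if $u_{1}$ and $u_{2}$ are both minimizers then by the previous step both lie in $\inte\dom f$, where $f$ is strictly convex. Hence $\D_{f}(\cdot,y)$ is strictly convex on $[u_{1},u_{2}]\subseteq\inte\dom f$, and the midpoint inequality $\D_{f}(\tfrac{u_{1}+u_{2}}{2},y)\le\tfrac{1}{2}(\D_{f}(u_{1},y)+\D_{f}(u_{2},y))$ is strict unless $u_{1}=u_{2}$, yielding uniqueness.
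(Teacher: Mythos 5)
The paper offers no proof of this statement---it is quoted directly from \cite[Corollary~7.9]{BBC2001}---so your proposal has to be judged on its own terms. Your three-step architecture (existence via coercivity, membership of the minimizer in $\inte \dom f$, uniqueness via strict convexity) is the standard one, and the first and third steps are sound: $\D_{f}(\cdot,y)+\iota_{C}$ is proper, convex, lower semicontinuous and, by \cref{fact:PropertD}\cref{fact:PropertD:leftCoercive}, coercive, so its infimum is attained on a weakly compact sublevel set; and once both minimizers are known to lie in $\inte\dom f$, strict convexity of $f$ there (\cref{fact:strictlyConvex}) gives uniqueness via the midpoint, which stays in $C\cap\inte\dom f$.

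The gap is in the interior step. Reducing it to the lemma \enquote{$f'(u;c_{0}-u)=-\infty$ whenever $u\in\bd\dom f\cap\dom f$ and $c_{0}\in\inte\dom f$} is legitimate, but your proposed proof of that lemma cannot work. Your two convexity estimates bound only the scalar $\innp{\nabla f(x_{t}),c_{0}-u}$; they say nothing about $\norm{\nabla f(x_{t})}$. In fact the third clause of essential smoothness in \cref{def:Legendre} forces $\norm{\nabla f(x_{t})}\to+\infty$ as $t\to 0^{+}$, because $x_{t}\in\inte\dom f$ converges in norm to $u\in\bd\dom f$; hence the net $(\nabla f(x_{t}))$ has no weak cluster points at all, and no subgradient of $f$ at $u$ can be produced by a weak-limit argument. (Nor does a finite directional derivative along a single direction imply $\partial f(u)\neq\varnothing$ in general, so there is no shortcut through the one-dimensional restriction either.) Two ways to repair the step: either bypass directional derivatives entirely by applying Fermat's rule $0\in\partial\left(\D_{f}(\cdot,y)+\iota_{C}\right)(u)$ together with the subdifferential sum rule---whose constraint qualification holds because $\D_{f}(\cdot,y)$ is finite and continuous at the point $c_{0}\in C\cap\inte\dom f$---which yields $\partial f(u)\neq\varnothing$ and hence $u\in\dom\partial f=\inte\dom f$ directly from the first clause of \cref{def:Legendre}; or invoke the directional-derivative characterization of essential smoothness established in \cite[Theorem~5.6]{BBC2001} rather than re-deriving it from the gradient-blowup condition.
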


Not surprisingly, not every Legendre function allows forward Bregman projections, and  the backward and forward Bregman projections are different notions (see, e.g.,  \cite{BB1997Legendre}, \cite{BC2003} and \cite{BD2002} for details). In particular, we can find well-known functions satisfying the hypotheses of the following \cref{fact:charac:PrightCf}    in \cite[Examples~2.1 and 2.7]{BC2003}. We refer the interested readers to \cite{BB1997Legendre}, \cite{BC2003} and \cite{BD2002} for details on the backward and forward Bregman projections.

\begin{definition} {\rm \cite[Definition~2.4]{BC2003}}
	Suppose that   $f \in \Gamma_{0} (\mathcal{H}) $  is  Legendre  such that $\dom f^{*}$ is open.  We say \emph{the function $f$ allows forward Bregman projections} if it satisfies the following properties.
	\begin{enumerate}
		\item $\nabla^{2} f$ exists and is continuous on $\inte \dom f$.
		\item $\D_{f}$ is convex on $\inte \dom f \times \inte \dom f$.
		\item For every $x \in \inte \dom f$, $D_{f}(x, \cdot)$ is strictly convex on $\inte \dom f$.
	\end{enumerate}
\end{definition}

\begin{fact} {\rm \cite[Fact~2.6]{BC2003}}  \label{fact:charac:PrightCf}
	Suppose that $\mathcal{H} =\mathbb{R}^{n}$ and  $f \in \Gamma_{0} (\mathcal{H}) $ is  Legendre  such that $\dom f^{*}$ is open, that $f$ allows forward Bregman projections, and that  $C $ is a closed convex subset of $\mathcal{H}$ with $C \cap \inte \dom f \neq \varnothing$. Then $(\forall x \in \inte \dom f)$ $\overrightarrow{\Pro}^{f}_{C}(x)  $ is a singleton contained in $C \cap \inte \dom f$.
\end{fact}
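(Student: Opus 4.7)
The plan is to convert the set-valued definition of $\overrightarrow{\Pro}^{f}_{C}(x)$ into a convex minimization problem on $C \cap \inte \dom f$, establish existence of a minimizer by a boundary-exclusion argument, and then extract uniqueness from strict convexity. Fix $x \in \inte \dom f$. Since $\D_{f}(x, c) = +\infty$ whenever $c \notin \inte \dom f$, the infimum
\[
\alpha := \inf_{c \in C} \D_{f}(x, c)
\]
reduces to an infimum over $C \cap \inte \dom f$ and is finite by hypothesis. Because $f$ allows forward Bregman projections, $\D_{f}(x, \cdot)$ is strictly convex on the convex set $\inte \dom f$, so any minimizer attained there is automatically unique.

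The core work is existence. I would take a minimizing sequence $(v_{k})_{k \in \mathbb{N}}$ in $C \cap \inte \dom f$ with $\D_{f}(x, v_{k}) \downarrow \alpha$ and argue in two steps. First, boundedness in $\mathbb{R}^{n}$: via Legendre duality, $\nabla f : \inte \dom f \to \inte \dom f^{*}$ is a bijection, and openness of $\dom f^{*}$ yields a supercoercivity-type property of $f$ that translates into $\D_{f}(x, \cdot)$ having bounded sublevel sets on $\inte \dom f$; extracting a convergent subsequence gives $v_{k} \to v^{*}$, with $v^{*} \in C$ by closedness of $C$. Second, I would show $v^{*} \in \inte \dom f$: if instead $v^{*} \in \bd \dom f$, essential smoothness forces $\norm{\nabla f(v_{k})} \to +\infty$, and using the Fenchel--Young rewriting
\[
\D_{f}(x, v_{k}) = f(x) + f^{*}(\nabla f(v_{k})) - \innp{\nabla f(v_{k}), x}
\]
combined with the fact that openness of $\dom f^{*}$ and the Legendre character of $f^{*}$ drive $f^{*}(\nabla f(v_{k})) \to +\infty$ along any escape of $\nabla f(v_{k})$ to infinity, one obtains $\D_{f}(x, v_{k}) \to +\infty$, contradicting convergence to the finite value $\alpha$. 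Lower semicontinuity of $\D_{f}(x, \cdot)$ on $\inte \dom f$ then confirms $v^{*}$ attains $\alpha$, and strict convexity yields uniqueness.

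The main obstacle is precisely this boundary-exclusion step. The hypotheses of essential smoothness and openness of $\dom f^{*}$ are paired exactly so that a minimizing sequence cannot drift to $\bd \dom f$, and coordinating them through the Legendre duality $\nabla f : \inte \dom f \to \inte \dom f^{*}$ is where the real technical content lies; the remaining convex-analytic and finite-dimensional ingredients (compactness of bounded sets, closedness of $C$, lower semicontinuity of $f$) enter routinely.
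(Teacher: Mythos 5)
The paper does not prove this statement at all: it is imported verbatim as \cite[Fact~2.6]{BC2003}, so there is no internal proof to compare yours against. Taken on its own, your reconstruction follows what is essentially the standard argument and its architecture is correct: reduce to minimizing $\D_{f}(x,\cdot)$ over $C\cap\inte\dom f$ (legitimate since $\D_{f}(x,c)=+\infty$ off $\inte\dom f$), extract uniqueness from the strict convexity of $\D_{f}(x,\cdot)$ packaged into \enquote{allows forward Bregman projections}, and obtain existence by showing a minimizing sequence can neither drift to $\bd\dom f$ nor escape to infinity. Two justifications are looser than they should be. In the boundary-exclusion step, $\norm{\nabla f(v_{k})}\to+\infty$ together with $f^{*}(\nabla f(v_{k}))\to+\infty$ is not by itself a contradiction, since the linear term $\innp{\nabla f(v_{k}),x}$ may blow up as well; what you actually need is coercivity of $u\mapsto f^{*}(u)-\innp{u,x}$, which holds precisely because $x\in\inte\dom f=\inte\dom f^{**}$ and does not use openness of $\dom f^{*}$ at all. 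Openness of $\dom f^{*}$ is instead what rules out $\norm{v_{k}}\to+\infty$ with $\nabla f(v_{k})$ bounded: a cluster point of $(\nabla f(v_{k}))_{k\in\mathbb{N}}$ on $\bd\dom f^{*}$ lies outside $\dom f^{*}$ and forces $f^{*}(\nabla f(v_{k}))\to+\infty$ along a subsequence by lower semicontinuity, while a cluster point in the open set $\dom f^{*}=\inte\dom f^{*}$ forces $v_{k}=\nabla f^{*}(\nabla f(v_{k}))$ to cluster at a finite point by continuity of $\nabla f^{*}$, contradicting unboundedness. With the roles of the hypotheses sorted out in this way (and continuity of $\D_{f}(x,\cdot)$ on $\inte\dom f$, automatic in $\mathbb{R}^{n}$, used to pass to the limit), your argument closes.
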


\section{Forward Bregman monotonicity} \label{sec:Monotonicity}
In this section, we show the weak convergence of the forward Bregman monotone sequence, which plays a critical role in the proof of our main result in the next section. 

The following is a variant of the Bregman monotonicity defined in \cite[Definition~1.2]{BBC2003}. Note that both     \cite[Definition~1.2]{BBC2003} and the following \cref{defn:BregBackMonotone} are natural generalizations of the classical Fej\'er monotonicity.  In view of the broad applications of Fej\'er monotonicity and Bregman monotonicity in the proof of the convergence of iterative algorithms (see, e.g., \cite{BC2017}, \cite{BBC2003}, \cite{Cegielski} and the references therein), we assume the forward Bregman monotonicity defined in \cref{defn:BregBackMonotone} below is interesting on its own and probably can be used in many other iterative algorithms under general Bregman distances.  
\begin{definition} \label{defn:BregBackMonotone}
	A sequence $(x_{k})_{k \in \mathbb{N}}$ in $\mathcal{H}$ is  \emph{forward Bregman   monotone with respect to a set $C \subseteq \mathcal{H}$} if $C\cap  \inte \dom f \neq \varnothing$,  $(x_{k})_{k \in \mathbb{N}}$  lies in $\inte \dom f $, and 
	\begin{align*}
	(\forall c \in  C \cap \inte \dom f )~ (\forall k \in \mathbb{N}) \quad \D_{f} (x_{k+1}, c ) \leq \D_{f} (x_{k} , c).
	\end{align*}
\end{definition}

\begin{fact} \label{fact:strictlyConvex}
	Suppose that  $f \in \Gamma_{0} (\mathcal{H}) $  with $\inte \dom f \neq \varnothing$, that $f$ is G\^ateaux differentiable  on $\inte \dom f$, and that $f$ is essentially strictly convex. Then $f$ is strictly convex on $\inte \dom f$, which is equivalent to  
	\begin{align*}
	(\forall x \in \inte \dom f ) (\forall y \in \inte \dom f )  \quad x \neq y \Rightarrow \innp{x-y, \nabla f(x) - \nabla f(y)} >0.
	\end{align*}
\end{fact}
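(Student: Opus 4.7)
The statement has two parts: first, that essential strict convexity implies strict convexity on $\inte \dom f$; second, that this strict convexity is equivalent to strict monotonicity of $\nabla f$ on $\inte \dom f$. For the first part, I would invoke \cref{def:Legendre:convex} directly. Since $f$ is G\^ateaux differentiable at every $x \in \inte \dom f$, we have $\partial f(x) = \{\nabla f(x)\}$ and therefore $\inte \dom f \subseteq \dom \partial f$; moreover $\inte \dom f$ is convex, being the interior of the convex set $\dom f$. The essential strict convexity hypothesis, which asserts strict convexity of $f$ on every convex subset of $\dom \partial f$, then applies with the set $\inte \dom f$ and yields the first claim.

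For the equivalence, my plan is the standard one-dimensional reduction. Fix distinct $x, y \in \inte \dom f$; the segment $[x,y]$ lies entirely in $\inte \dom f$ by convexity, so $\varphi(t) := f(x + t(y-x))$ is well-defined on $[0,1]$ and by the chain rule $\varphi'(t) = \innp{\nabla f(x + t(y-x)), y-x}$. For the direction $(\Rightarrow)$, strict convexity of $f$ on $[x,y]$ passes to strict convexity of $\varphi$ on $[0,1]$; for a differentiable convex function on an interval this is equivalent to $\varphi'$ being strictly increasing, and evaluating at the endpoints gives $\innp{\nabla f(y) - \nabla f(x), y-x} > 0$, i.e., the claimed inequality. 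For $(\Leftarrow)$, given strict monotonicity, for any $0 \leq s < t \leq 1$ set $u := x + s(y-x)$ and $v := x + t(y-x)$; then $v - u = (t-s)(y-x) \neq 0$, so
\[
\varphi'(t) - \varphi'(s) \;=\; \tfrac{1}{t-s}\innp{\nabla f(v) - \nabla f(u), v-u} \;>\; 0,
\]
hence $\varphi$ is strictly convex on $[0,1]$. This yields strict convexity of $f$ on every segment inside $\inte \dom f$, and therefore on $\inte \dom f$ itself.

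The only delicate step is the first one, where one must confirm the two set-theoretic ingredients---the inclusion $\inte \dom f \subseteq \dom \partial f$ and the convexity of $\inte \dom f$---in order to legitimately invoke \cref{def:Legendre:convex}, since the definition of essential strict convexity only ranges over convex subsets of $\dom \partial f$. Everything thereafter reduces to the classical equivalence between strict convexity and strict monotonicity of the gradient for G\^ateaux differentiable convex functions, achieved via the one-variable restriction above.
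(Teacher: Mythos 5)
Your proof is correct. The first half follows essentially the same path as the paper: both arguments come down to verifying that $\inte \dom f$ is a convex subset of $\dom \partial f$ so that the definition of essential strict convexity can be applied with this set. The paper obtains the inclusion $\inte \dom f \subseteq \dom \partial f$ by citing \cite[Propositions~3.45(ii), 8.2 and 16.27]{BC2017} (continuity, hence subdifferentiability, of a proper lsc convex function on the interior of its domain), whereas you obtain it from the G\^ateaux differentiability hypothesis via $\partial f(x) = \{\nabla f(x)\}$; both routes are legitimate here. The second half is where you genuinely diverge: the paper disposes of the equivalence between strict convexity and strict monotonicity of the gradient in one line by applying \cite[Proposition~17.10]{BC2017} to the function $f + \iota_{\inte \dom f}$, while you prove it from scratch by restricting to segments $[x,y] \subseteq \inte \dom f$ and invoking the classical one-variable equivalence between strict convexity of $\varphi(t) = f(x + t(y-x))$ and strict monotonicity of $\varphi'$. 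Your version is longer but self-contained and makes explicit exactly where G\^ateaux differentiability and the convexity of $\inte \dom f$ enter; the paper's version is shorter but delegates the work to an external proposition whose hypotheses the reader must match against $f + \iota_{\inte \dom f}$. Both arguments are sound.
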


\begin{proof}
	Because $f$ is convex, by \cite[Propositions~3.45(ii), 8.2 and 16.27]{BC2017}, $\inte \dom f$ is a  convex subset of $\dom \partial f$. Then employ  \cref{def:Legendre}\cref{def:Legendre:convex} to see that $f$ is strictly convex on $\inte \dom f$.
Hence, the last equivalence is obtained by applying \cite[Proposition~17.10]{BC2017} to the function $f+\iota_{\inte \dom f}$.
\end{proof}

\begin{theorem} \label{theorem:ConvBregBackwMonot}
	Suppose that  $f \in \Gamma_{0} (\mathcal{H}) $  with $\inte \dom f \neq \varnothing$ and that $f$ is G\^ateaux differentiable  on $\inte \dom f$. Let $C $ be a closed convex set in $\mathcal{H}$ with $C\cap  \inte \dom f \neq \varnothing$, and let  $(x_{k})_{k \in \mathbb{N}}$ be in $ \inte \dom f$  and be forward Bregman  monotone with respect to $C$.
	Then the following statements hold.
	\begin{enumerate}
		\item  \label{theorem:ConvBregBackwMonot:D} $\left(\forall c \in C\cap  \inte \dom f  \right)$  $\left( \D_{f} (x_{k} , c) \right)_{k \in \mathbb{N}}$ is decreasing, nonnegative and convergent.
			\item  \label{theorem:ConvBregBackwMonot:limexist} Let $\{ y,z\} \subseteq C\cap  \inte \dom f$. Then the limit $\lim_{k \to \infty} \innp{\nabla f(y) - \nabla f(z) , x_{k}}$ exists. 
		\item \label{theorem:ConvBregBackwMonot:x} Suppose that $f$ is essentially strictly convex. Then $(x_{k} )_{k \in \mathbb{N}}$ is bounded. 
		\item \label{theorem:ConvBregBackwMonot:conv} Suppose that  $f$ is essentially strictly convex. Then $(x_{k})_{k \in \mathbb{N}}$ weakly converges   to some point in $ C\cap  \inte \dom f$ if and only if all  weak sequential cluster points of $(x_{k})_{k \in \mathbb{N}}$ are in $ C\cap  \inte \dom f$.
	\end{enumerate}
\end{theorem}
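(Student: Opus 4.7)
The plan is to dispatch (i)--(iv) in order, following the Opial template for Fej\'er monotone sequences with squared norms replaced by Bregman distances; parts (i)--(iii) are essentially bookkeeping, while the substantive step lives in (iv).

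For (i), the forward Bregman monotonicity inequality gives the decreasing property, $\D_{f} \geq 0$ gives a lower bound, and monotone convergence closes the argument. For (ii), fix $y, z \in C \cap \inte \dom f$ and subtract the definitions of $\D_{f}(x_{k}, y)$ and $\D_{f}(x_{k}, z)$: the $f(x_{k})$ terms cancel, and the remainder splits into a constant in $k$ plus the single linear term $\innp{\nabla f(z) - \nabla f(y), x_{k}}$, which must therefore converge by (i). For (iii), essential strict convexity together with \cref{fact:PropertD}\cref{fact:PropertD:leftCoercive} yields coercivity of $\D_{f}(\cdot, c)$ for any $c \in C \cap \inte \dom f$; combined with the boundedness of $(\D_{f}(x_{k}, c))_{k \in \mathbb{N}}$ from (i), any unbounded subsequence of $(x_{k})$ would drive $\D_{f}(x_{k}, c)$ to $+\infty$, a contradiction.

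For (iv), the ``only if'' direction is immediate from uniqueness of weak limits. For the converse, (iii) supplies boundedness of $(x_{k})_{k \in \mathbb{N}}$, so the set of weak sequential cluster points is nonempty. If $y$ and $z$ are two such cluster points, then by hypothesis both lie in $C \cap \inte \dom f$. By (ii), the limit $L := \lim_{k \to \infty} \innp{\nabla f(y) - \nabla f(z), x_{k}}$ exists; evaluating along weakly convergent subsequences $x_{k_{j}} \weakly y$ and $x_{\ell_{j}} \weakly z$ gives
\[
L = \innp{\nabla f(y) - \nabla f(z), y} = \innp{\nabla f(y) - \nabla f(z), z},
\]
i.e., $\innp{\nabla f(y) - \nabla f(z), y - z} = 0$. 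Then \cref{fact:strictlyConvex} (strict monotonicity of $\nabla f$ on $\inte \dom f$) forces $y = z$, so the weak cluster point is unique, and boundedness upgrades this to weak convergence.

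The main subtlety I anticipate is in (iv): the Opial-type uniqueness argument requires identifying the right ``linear'' quantity in $x_{k}$ whose convergence one can extract from (i). In the classical Fej\'er case this is $\innp{x_{k}, y - z}$, obtained by expanding $\norm{x_{k} - y}^{2} - \norm{x_{k} - z}^{2}$; here the analogous object coming out of Bregman distances is $\innp{\nabla f(y) - \nabla f(z), x_{k}}$, and essential strict convexity of $f$ is precisely what converts equality of this inner product at $y$ and $z$ into $y = z$.
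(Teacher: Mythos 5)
Your proposal is correct and follows essentially the same route as the paper: part (ii) is obtained by cancelling $f(x_{k})$ in the difference $\D_{f}(x_{k},y)-\D_{f}(x_{k},z)$, part (iii) uses the coercivity from \cref{fact:PropertD}\cref{fact:PropertD:leftCoercive}, and part (iv) runs the Opial-type uniqueness argument via \cref{theorem:ConvBregBackwMonot:limexist} and the strict monotonicity of $\nabla f$ from \cref{fact:strictlyConvex}. No gaps.
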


\begin{proof}
	\cref{theorem:ConvBregBackwMonot:D}: This is clear from   \cref{defn:BregBackMonotone,defn:BregmanDistance}. 
	
	\cref{theorem:ConvBregBackwMonot:limexist}: According to \cref{defn:BregmanDistance}, for every $k \in \mathbb{N}$,
	\begin{align*}
	 &\D_{f} (x_{k} , y) -\D_{f} (x_{k} ,z) = f(x_{k}) - f(y) -\innp{ \nabla f (y) , x_{k} -y} - \Big(   f(x_{k}) - f(z) -\innp{ \nabla f (z) , x_{k} -z} \Big)\\ 
\Leftrightarrow &	\innp{ \nabla f (z) -\nabla f (y) , x_{k}}=\D_{f} (x_{k} , y) -\D_{f} (x_{k} ,z) +f(y)-f(z) -\innp{ \nabla f (y) ,  y} +\innp{ \nabla f (z) , z}.
	\end{align*}
	 which, via \cref{theorem:ConvBregBackwMonot:D}, yields the desired result.
	 
	\cref{theorem:ConvBregBackwMonot:x}: Let $c \in C\cap  \inte \dom f$. 
	Then based on \cref{fact:PropertD},  the boundedness of	$\left( \D_{f} (x_{k} , c) \right)_{k \in \mathbb{N}}$   implies the boundedness of $(x_{k} )_{k \in \mathbb{N}}$. 

\cref{theorem:ConvBregBackwMonot:conv}:	\enquote{$\Rightarrow$}: This is trivial.
	
	\enquote{$\Leftarrow$}: Suppose that all weak sequential cluster points of $(x_{k})_{k \in \mathbb{N}}$ are in $C\cap  \inte \dom f$.    
Because the boundedness of $(x_{k} )_{k \in \mathbb{N}}$ is proved in \cref{theorem:ConvBregBackwMonot:x} above, bearing \cite[Lemma~2.46]{BC2017} in mind, we know that it remains to prove that $(x_{k} )_{k \in \mathbb{N}}$ has at most one weak sequential cluster point in $ C \cap \inte \dom f$. Assume that $ \bar{x} $ and $\hat{x}$ are two weak sequential cluster points of $(x_{k} )_{k \in \mathbb{N}}$ in $C\cap  \inte \dom f$. Then there exist subsequences $( x_{k_{j}} )_{j \in \mathbb{N}} $ and $( x_{k_{t}} )_{t \in \mathbb{N}} $ of $ ( x_{k})_{k \in \mathbb{N}}$ such that $x_{k_{j}} \weakly \bar{x}$ and $x_{k_{t}} \weakly \hat{x}$.
Then
\begin{subequations}\label{eq:prop:ConvBregBackwMonot:conv}
	\begin{align}
	& \innp{\nabla f(\bar{x}) - \nabla f(\hat{x}) , x_{k_{j}}} \to  \innp{\nabla f(\bar{x}) - \nabla f(\hat{x}), \bar{x}}; \\
	 & \innp{\nabla f(\bar{x}) - \nabla f(\hat{x}) , x_{k_{t}}} \to  \innp{\nabla f(\bar{x}) - \nabla f(\hat{x}) , \hat{x}}. 
	\end{align}
\end{subequations}
On the other hand, apply \cref{theorem:ConvBregBackwMonot:limexist} with $y=\bar{x}$ and $z=\hat{x}$ to deduce that $\lim_{k \to \infty} \innp{\nabla f(\bar{x}) - \nabla f(\hat{x}) , x_{k}}$  exists. 
 This combined with \cref{eq:prop:ConvBregBackwMonot:conv} implies that $\innp{\nabla f(\bar{x}) - \nabla f(\hat{x}), \bar{x}} = \innp{\nabla f(\bar{x}) - \nabla f(\hat{x}), \hat{x}} $, that is, 
 \begin{align*} 
 \innp{\nabla f(\bar{x}) - \nabla f(\hat{x}), \bar{x}- \hat{x}} =0,
 \end{align*}
 which, combining with \cref{fact:strictlyConvex}, yields $\hat{x} =\bar{x}$. Altogether, the proof is complete. 
\end{proof}

\section{Bregman circumcenter methods} \label{sec:Bregmancircumcentermethods}
As we mentioned in the introduction, projection methods based on  Bregman distances have broad applications and some circumcentered methods accelerate the method of cyclic projections  for finding the best approximation point onto, or  a feasibility point in, the intersection of finitely many closed convex sets.  
In this section, we introduce backward and forward Bregman (pseudo-)circumcenter mappings and methods. We shall also investigate the convergence of the forward Bregman circumcenter method.   

Throughout this section, suppose that $f \in \Gamma_{0} (\mathcal{H}) $  with $\inte \dom f \neq \varnothing$ and that $f$ is G\^ateaux  differentiable  on $\inte \dom f$. Set $\I := \{1, \ldots, m\}$.

\subsection*{Bregman circumcenter mappings}
 
 Henceforth, for every $\mathcal{D} \subseteq \mathcal{H}$,
 \begin{align*}
 \text{denote by }   \mathcal{P}(\mathcal{D}) \text{ the set of all nonempty subsets of } \mathcal{D} \text{ containing finitely many elements}.
 \end{align*}
Suppose that  $K \in \mathcal{P} (\dom f)$. Set 
\begin{align*}
\overrightarrow{E}_{f}(K):= \{ q \in \inte \dom f~:~  (\forall x \in K) ~\D_{f} (x,q) \text{ is identical} \}.
\end{align*}
Suppose additionally $K \in \mathcal{P} (\inte \dom f)$. Denote by
\begin{align*}
\overleftarrow{E}_{f}(K):= \{ p \in \dom f ~:~ (\forall y \in K) ~ \D_{f} (p,y) \text{ is identical}\}.
\end{align*} 

 \begin{definition} {\rm \cite[Definition~3.1]{BOyW2021BregmanCircumBasicTheory}} \label{defn:CCS:Bregman:left}
 	Let $K \in \mathcal{P}( \inte \dom f)  $. 
 	\begin{enumerate}
 		\item \label{defn:CCS:Bregman:left:} Define the \emph{backward Bregman circumcenter operator $\overleftarrow{\CCO{}} $ w.r.t.\,$f$} as
 		\begin{align*}
 		\overleftarrow{\CCO{}}  : \mathcal{P}( \inte \dom f) \to 2^{\mathcal{H}} :  K \mapsto
 		\aff (K )\cap \overleftarrow{E}_{f}( K ).
 		\end{align*}
 		\item \label{defn:CCS:Bregman:left:ps}	Define the \emph{backward Bregman pseudo-circumcenter operator $\overleftarrow{\CCO{}}^{ps} $ w.r.t.\,$f$} as
 		\begin{align*}
 		\overleftarrow{\CCO{}}^{ps}  :  \mathcal{P}( \inte \dom f) \to 2^{\mathcal{H}} :  K \mapsto  \aff ( \nabla f (K ))\cap \overleftarrow{E}_{f}( K ).
 		\end{align*}
 	\end{enumerate}
 	In particular, for every $K \in  \mathcal{P}( \inte \dom f)  $, we call the element  in  $	\overleftarrow{\CCO{}} (K) $ and   $\overleftarrow{\CCO{}}^{ps}  (K)$ \emph{Backward Bregman  circumcenter  and  Backward Bregman  pseudo-circumcenter of $K$}, respectively.
 \end{definition}

 \begin{definition}  {\rm \cite[Definition~4.1]{BOyW2021BregmanCircumBasicTheory}} \label{defn:CCS:Bregman:forward}
 	Let $K \in \mathcal{P} (\dom f)$.
 	\begin{enumerate}
 		\item \label{defn:CCS:Bregman:forward:}	Define the \emph{forward Bregman circumcenter  operator w.r.t.\,$f$} as
 		\begin{align*}
 		\overrightarrow{\CCO{}}   :  	\mathcal{P} (\dom f) \to 2^{\mathcal{H}} : K \mapsto  \aff (  K ) \cap 	\overrightarrow{E}_{f}(K).
 		\end{align*}
 		\item \label{defn:CCS:Bregman:forward:ps} Define the \emph{forward Bregman pseudo-circumcenter  operator w.r.t.\,$f$} as
 		\begin{align*}
 		\overrightarrow{\CCO{}}^{ps}  :  \mathcal{P} (\dom f)  \to  2^{\mathcal{H}}  : K \mapsto  
 		\left(	\nabla f^{*} \left( \aff   K \right) \right) \cap \overrightarrow{E}_{f}  (K).
 		\end{align*}
 	\end{enumerate}
	In particular, for every $K \in  \mathcal{P} (\dom f)$, we call the element  in $\overrightarrow{\CCO{}} (K) $ and   $\overrightarrow{\CCO{}}^{ps} (K)$ \emph{forward Bregman circumcenter  and forward Bregman pseudo-circumcenter of $K$}, respectively. 
 \end{definition}

 Suppose  that $\D_{f}$ is symmetric, that is, $\dom f =\inte \dom f$ and $(\forall \{x,y\} \subseteq \dom f)$ $\D_{f} (x,y)=\D_{f} (y,x)$. Then the backward Bregman circumcenter and forward Bregman circumcenter are consistent. In particular, 
 if $f:= \frac{1}{2} \norm{\cdot}^{2}$, then $\nabla f =\Id$ and hence, the notions backward Bregman circumcenter,  backward Bregman pseudo-circumcenter, forward Bregman circumcenter and  forward Bregman pseudo-circumcenter  are all the same and reduce to the circumcenter defined in  \cite[Definition~3.4]{BOyW2018} under the Euclidean distance.
 
 From now on,  for every set-valued operator $A : \mathcal{H} \to 2^{\mathcal{H}}$, if $A(x)$ is a singleton for some $x \in \mathcal{H}$, we sometimes by slight abuse of notation allow   $A(x)$ to stand for  its unique element. The intended meaning should be clear from the context. 
   
 \begin{fact} {\rm \cite[Corollary~6.1]{BOyW2021BregmanCircumBasicTheory}} \label{fact:CircumOperator}
 	Suppose that $K:= \{q_{0}, q_{1}, \ldots, q_{m} \} \subseteq    \dom f$  is nonempty.
 		Then the following assertions hold.
 		\begin{enumerate}
 			\item \label{fact:CircumOperator:back} Suppose that $\mathcal{H} =\mathbb{R}^{n}$, that $f$ is Legendre such that $\dom f^{*}$ is open, that $\overleftarrow{E}_{f}(K)  \neq \varnothing$,  that $f$	allows forward Bregman projections,   that $\aff (K) \subseteq \inte \dom f$, and that $\nabla f (\aff (K)  ) $ is a closed  affine subspace.    Then $\left( \forall z \in \overleftarrow{E}_{f}(K) \right)$ $ \overrightarrow{\Pro}^{f}_{\aff(K )} (z) \in \overleftarrow{\CCO{}} (K)$.  
 			
 			\item \label{fact:CircumOperator:backps} Suppose that $\overleftarrow{E}_{f}(K) \neq \varnothing$ and that $K \subseteq \inte \dom f $ and  $\aff(\nabla f( K) ) \subseteq \dom f$.  Then $\left( \forall z \in \overleftarrow{E}_{f}(K) \right)$ $\overleftarrow{\CCO{}}^{ps} (K)=\Pro_{\aff(\nabla f( K ) )} (z)  $.   
 			
 			\item \label{fact:CircumOperator:backps:formu} Suppose that $K \subseteq \inte \dom f $ and $\aff ( \nabla f (K) ) \subseteq \dom f$, and that $\nabla f(q_{0}), \nabla f(q_{1}), \ldots, \nabla f(q_{m})$ are affinely independent.
 			Then $\overleftarrow{\CCO{}}^{ps} (K) $ uniquely exists and has an explicit formula. 
 			
 			\item \label{fact:CircumOperator:for} Suppose that $f$ is Legendre,  that $ \aff (K) \cap  \inte \dom f \neq \varnothing$, and that $\overrightarrow{E}_{f}(K )   \neq \varnothing$.  Then 
 			$\left( \forall z \in \overrightarrow{E}_{f}(K )  \right)$
 			$ \overleftarrow{\Pro}^{f}_{\aff(K)}(z) \in \overrightarrow{\CCO{}} (K)$.   
 			
 			\item \label{fact:CircumOperator:forps} Suppose that $f$ is Legendre, and that  $\aff( K  ) \subseteq  \inte \dom f^{*}$ and $\overrightarrow{E}_{f}(K )   \neq \varnothing$.  Then 
 			$\left( \forall z \in \overrightarrow{E}_{f}(K)  \right)$  $\overrightarrow{\CCO{}}^{ps} (K)= \nabla f^{*} \left( \Pro_{\aff( K  )} (\nabla f(z)) \right)$.  
 			
 			\item \label{fact:CircumOperator:forps:for}
 			Suppose that $f$ is Legendre, that $\aff ( K) \subseteq \inte \dom f^{*}$, and  that $q_{0}, q_{1}, \ldots, q_{m}$ are affinely independent. 
 			Then $\overrightarrow{\CCO{}}^{ps} (K) $ uniquely exists and has an explicit formula.  
 		\end{enumerate}
 \end{fact}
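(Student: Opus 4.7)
The unifying tool for all six parts is the three-point identity
\[
\D_f(X, Z) \,=\, \D_f(X, Y) + \D_f(Y, Z) + \innp{\nabla f(Y) - \nabla f(Z), X - Y},
\]
which turns each equidistance condition defining $\overleftarrow{E}_f(K)$ or $\overrightarrow{E}_f(K)$ into an orthogonality condition on gradient differences, matchable against projection optimality.  I would begin with the cleaner cases (iv), (ii) and (v).  For (iv), applying the identity with $X = q_i$, $Y = v := \overleftarrow{\Pro}^f_{\aff(K)}(z)$, $Z = z$ and using that $\D_f(q_i, z)$ is constant in $i$ reduces the forward equidistance of $v$ to $\nabla f(v) - \nabla f(z) \perp \aff(K) - v$, which is exactly the classical variational characterisation of backward Bregman projection onto an affine subspace.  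For (ii) and (v), the equidistance equations expand directly into a linear system of the form $\innp{\nabla f(q_j) - \nabla f(q_i), p} = \phi(q_i) - \phi(q_j)$ (respectively a linear system in $\nabla f(q)$).  Euclidean projection of any known solution $z$ onto $\aff(\nabla f(K))$ (respectively of $\nabla f(z)$ onto $\aff(K)$, composed with $\nabla f^*$) therefore remains in the solution set by the orthogonality characterisation of Euclidean projection onto affine subspaces, and lies in the target affine hull by construction; uniqueness of Euclidean projection onto an affine subspace turns containment into the claimed equality.

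Part (i) is the main obstacle, since forward Bregman projections lack a clean primal optimality characterisation.  My plan is to pass to the dual via Legendre conjugacy: $v := \overrightarrow{\Pro}^f_{\aff(K)}(z)$ is equivalently characterised by $\nabla f(v) = \overleftarrow{\Pro}^{f^*}_{\nabla f(\aff(K))}(\nabla f(z))$.  The hypothesis that $\nabla f(\aff(K))$ is a \emph{closed affine subspace} is exactly what makes the right-hand side a backward Bregman projection onto an affine subspace in the dual, yielding the clean optimality $v - z \perp \nabla f(\aff(K)) - \nabla f(v)$, i.e., $v - z$ is orthogonal to the parallel subspace of $\nabla f(\aff(K))$.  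Plugging this into the three-point identity with $X = v$, $Y = z$, $Z = q_i$ reduces the backward equidistance of $v$ to $v - z$ being orthogonal to the parallel subspace of $\aff(\nabla f(K))$; since $\aff(\nabla f(K)) \subseteq \nabla f(\aff(K))$ (the latter being affine and containing $\nabla f(K)$), the parallel subspace of the former is contained in that of the latter, so the dual optimality delivers precisely what is needed.  Together with $v \in \aff(K) \cap \inte \dom f$ from \cref{fact:charac:PrightCf}, this yields $v \in \overleftarrow{\CCO{}}(K)$.

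Parts (iii) and (vi) then follow from (ii) and (v) by a dimension count: affine independence of $\{\nabla f(q_0), \ldots, \nabla f(q_m)\}$ (respectively $\{q_0, \ldots, q_m\}$) makes the relevant affine hull exactly $m$-dimensional and the equidistance linear system of full rank $m$, so the Euclidean projection is uniquely determined; the explicit formula is the standard closed-form expression for Euclidean projection onto an affine hull, writable via the Gram matrix of the $m$ edge vectors $\nabla f(q_i) - \nabla f(q_0)$ (respectively $q_i - q_0$).  The most delicate step I anticipate is justifying the primal--dual transfer in part (i), particularly verifying the equivalence $v = \overrightarrow{\Pro}^f_{\aff(K)}(z) \Leftrightarrow \nabla f(v) = \overleftarrow{\Pro}^{f^*}_{\nabla f(\aff(K))}(\nabla f(z))$ under the given hypotheses on $f$ and $\nabla f(\aff(K))$, after which every other part fits into the three-point-identity/orthogonality template above.
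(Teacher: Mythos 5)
This statement is imported verbatim as a \emph{Fact} from the companion paper (\cite[Corollary~5.1]{BOyW2021BregmanCircumBasicTheory}); the present paper contains no proof of it, so there is nothing internal to compare your argument against line by line. The only trace of the intended mechanism visible here is the Pythagoras-type identity $\D_{f}(u,z)=\D_{f}(u,\overleftarrow{\Pro}^{f}_{U}z)+\D_{f}(\overleftarrow{\Pro}^{f}_{U}z,z)$ for $u\in U$ affine, invoked in the proof of \cref{theorem:conver:G} via \cite[Theorem~2.9(i)]{BOyW2021BregmanCircumBasicTheory} --- and that is exactly the identity your three-point computation produces for part \cref{fact:CircumOperator:for}, so your route is at least consistent with the source's. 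Checking your reconstruction on its own terms: parts \cref{fact:CircumOperator:for}, \cref{fact:CircumOperator:backps}, \cref{fact:CircumOperator:forps} are correct as sketched (the equidistance sets are affine with parallel space $\left(\pa \aff(\nabla f(K))\right)^{\perp}$, resp.\ pulled back under $\nabla f$ from $\left(\pa\aff(K)\right)^{\perp}$, and the orthogonality characterizations of $\overleftarrow{\Pro}^{f}_{\aff(K)}$ and of Euclidean projection do the rest), and your dual-transfer argument for part \cref{fact:CircumOperator:back} is sound: $\D_{f}(z,\cdot)=\D_{f^{*}}(\nabla f(\cdot),\nabla f(z))$ converts the forward projection into a backward $f^{*}$-projection onto $\nabla f(\aff(K))$, whose affineness is precisely what the hypothesis supplies, and $\pa\aff(\nabla f(K))\subseteq\pa\bigl(\nabla f(\aff(K))\bigr)$ closes the loop. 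Two small points to tighten: parts \cref{fact:CircumOperator:backps:formu} and \cref{fact:CircumOperator:forps:for} assert \emph{existence} without assuming the equidistance sets are nonempty, so they do not literally follow from \cref{fact:CircumOperator:backps} and \cref{fact:CircumOperator:forps}; you need (and essentially state) the direct solvability of the Gram system restricted to the $m$-dimensional affine hull, plus the range conditions $\aff(\nabla f(K))\subseteq\dom f$, resp.\ $\aff(K)\subseteq\inte\dom f^{*}$, to land inside the equidistance sets. And in part \cref{fact:CircumOperator:back} the hypothesis only gives $z\in\overleftarrow{E}_{f}(K)\subseteq\dom f$, whereas your dual transfer needs $\nabla f(z)$; this is a wrinkle in the stated hypotheses rather than in your argument, but it deserves a sentence.
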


Note that we have particular examples in \cite{BOyW2021BregmanCircumBasicTheory} with functions $f \neq \frac{1}{2} \norm{ \cdot}^{2}$ and  sets $K$ such that the hypotheses of each item of \cref{fact:CircumOperator} above hold.

\begin{definition}  \label{defn:BregCirMap}
Let $t \in \mathbb{N} \smallsetminus \{0\}$. 	Suppose that $G_{1}, \ldots, G_{t}$ are operators from $\mathcal{H}$ to $\mathcal{H}$. 
Set 
	\begin{align*} 
	\mathcal{S}:= \{G_{1}, \ldots, G_{t} \} \quad \text{and} \quad  (\forall x \in \mathcal{H})  ~ \mathcal{S}(x):= \{G_{1}x, \ldots,G_{t}x \}. 
	\end{align*}
		The  \emph{forward Bregman  circumcenter mapping induced by $\mathcal{S}$} is
		\begin{empheq}[box=\mybluebox]{equation} \label{eq:FBCM}
		\overrightarrow{\CCO}_{\mathcal{S}} \colon \mathcal{H} \to 2^{\mathcal{H}} \colon y \mapsto 	\overrightarrow{\CCO}(\mathcal{S}(y)), 
		\end{empheq}
		that is, for every $y \in \mathcal{H}$, if the forward Bregman circumcenter of the set $\mathcal{S}(y)$ defined in \cref{defn:CCS:Bregman:forward}\cref{defn:CCS:Bregman:forward:}	  does not exist, then  $\overrightarrow{\CCO}_{\mathcal{S}} y= \varnothing $. Otherwise, $\overrightarrow{\CCO}_{\mathcal{S}} y$ is the  set of points $v$ satisfying the two conditions below:
		\begin{enumerate}
			\item $v \in \aff(\mathcal{S}(x)) \cap \inte \dom f$, and
			\item $\D_{f} (G_{1}x,v)   =\cdots =\D_{f} (G_{t}x,v) $.
		\end{enumerate}
Let $\mathcal{C}$ be a subset of $\mathcal{H}$.
If $(\forall y \in  \mathcal{C})$ $\overrightarrow{\CCO}_{\mathcal{S}}y$   contains at most one element, we say that $\overrightarrow{\CCO}_{\mathcal{S}}$ is \emph{at most single-valued on $\mathcal{C}$}. Naturally,   if $\mathcal{C} =\mathcal{H}$, then we omit the phrase \enquote{on $\mathcal{C}$}.

Analogously, we define the \emph{forward Bregman pseudo-circumcenter mapping $\overrightarrow{\CCO}_{\mathcal{S}}^{ps} $ induced by $\mathcal{S}$}, \emph{backward Bregman  circumcenter mapping $\overleftarrow{\CCO}_{\mathcal{S}} $ induced by $\mathcal{S}$}, and \emph{backward Bregman pseudo-circumcenter mapping $\overleftarrow{\CCO}_{\mathcal{S}}^{ps} $ induced by $\mathcal{S}$}  with replacing the forward Bregman circumcenter operator $\overrightarrow{\CCO}$ in \cref{eq:FBCM} by the corresponding operator, $\overrightarrow{\CCO}^{ps}$, $\overleftarrow{\CCO}$, and $\overleftarrow{\CCO}^{ps}$, respectively. 
\end{definition}

We can also directly deduce the following result by \cite[Theorem~3.3(ii)]{BOyW2019Isometry}.
\begin{corollary}
	Suppose that  $f :=\frac{1}{2} \norm{\cdot}^{2}$.  Let $(\forall i \in \I)$ $T_{i} : \mathcal{H} \rightarrow \mathcal{H}$ be a linear  isometry\footnotemark.
	\footnotetext{A mapping $T: \mathcal{H} \to \mathcal{H}$ is said to be isometric or an isometry if $(\forall x \in \mathcal{H})$ $(\forall y \in \mathcal{H})$ $\norm{Tx- Ty} =\norm{x-y}$.}
	Set $\mathcal{S} :=\{ T_{1}, \ldots,   T_{m} \} $ and $(\forall x \in \mathcal{H})$  $\mathcal{S}(x) = \{ T_{1}x, \ldots,   T_{m}x  \}$. Then	$\overrightarrow{\CCO}_{\mathcal{S}}  (x)=\overrightarrow{\CCO}_{\mathcal{S}}^{ps} (x) = \overleftarrow{\CCO}_{\mathcal{S}} (x)= \overleftarrow{\CCO}_{\mathcal{S}}^{ps} (x)=\Pro_{\aff(\mathcal{S}(x))} (0)$.
\end{corollary}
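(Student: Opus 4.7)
My plan is to exploit the specialization to the Euclidean setting and reduce everything to a single fact about the projection of the origin onto the affine hull of the images $T_i x$.

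First I would unpack the hypotheses. With $f = \frac{1}{2}\norm{\cdot}^{2}$ we have $\dom f = \inte \dom f = \mathcal{H}$, $\nabla f = \Id = \nabla f^{*}$, and $\D_{f}(u,w) = \frac{1}{2}\norm{u-w}^{2}$, so $\D_{f}$ is symmetric and always single-valued. Consequently $\overleftarrow{E}_{f}(K) = \overrightarrow{E}_{f}(K) = \mathcal{H}$, and $\aff(\nabla f(K)) = \aff(K) = \nabla f^{*}(\aff(K))$. Comparing \cref{defn:CCS:Bregman:left,defn:CCS:Bregman:forward} then shows that on $\mathcal{S}(x) = \{T_{1}x,\dots,T_{m}x\}$ the four operators $\overrightarrow{\CCO}$, $\overrightarrow{\CCO}^{ps}$, $\overleftarrow{\CCO}$, $\overleftarrow{\CCO}^{ps}$ agree (as the paragraph just before the corollary already records), so all four circumcenter mappings $\overrightarrow{\CCO}_{\mathcal{S}}(x),\,\overrightarrow{\CCO}_{\mathcal{S}}^{ps}(x),\,\overleftarrow{\CCO}_{\mathcal{S}}(x),\,\overleftarrow{\CCO}_{\mathcal{S}}^{ps}(x)$ coincide with the classical Euclidean circumcenter of $\mathcal{S}(x)$ in the sense of \cite[Definition~3.4]{BOyW2018}.

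Next I would invoke \cite[Theorem~3.3(ii)]{BOyW2019Isometry} to identify this common value with $\Pro_{\aff(\mathcal{S}(x))}(0)$. Since the paper says the corollary can be deduced \emph{directly} from that result, the proof essentially consists of the identification in the previous paragraph followed by a citation. For the reader's benefit I would briefly sketch why the equality holds: setting $v := \Pro_{\aff(\mathcal{S}(x))}(0)$, the normal-cone characterization of the Euclidean projection gives $\innp{v,\, T_{i}x - v} = 0$ for every $i \in \I$, hence $\innp{T_{i}x,v}=\norm{v}^{2}$ is independent of $i$. Combined with $\norm{T_{i}x}=\norm{x}$ from the isometry hypothesis, expanding yields
\begin{equation*}
\norm{T_{i}x - v}^{2} = \norm{T_{i}x}^{2} - 2\innp{T_{i}x,v} + \norm{v}^{2} = \norm{x}^{2} - \norm{v}^{2},
\end{equation*}
which is constant in $i$. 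Together with $v \in \aff(\mathcal{S}(x))$ this verifies the defining conditions of $\overrightarrow{\CCO}_{\mathcal{S}}(x)$ in \cref{defn:BregCirMap}.

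The only real subtlety is uniqueness, i.e.\ justifying that $\overrightarrow{\CCO}_{\mathcal{S}}(x)$ is a \emph{singleton} rather than merely containing $v$. In the Euclidean setting any two elements of the circumcenter set lie in $\aff(\mathcal{S}(x))$ and are equidistant from each $T_{i}x$; subtracting their defining systems and using that the affine hull is spanned by the vectors $T_{i}x - T_{1}x$ forces their difference to be orthogonal to its own linear span, hence zero. This is exactly the content of \cite[Theorem~3.3(ii)]{BOyW2019Isometry}, so appealing to that theorem closes the argument without extra work.
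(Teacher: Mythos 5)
Your argument is correct and rests on the same underlying fact as the paper's proof: since each $T_{i}$ is a \emph{linear} isometry, $0\in\cap_{i\in\I}\Fix T_{i}$, and a common fixed point is automatically equidistant from $T_{1}x,\dots,T_{m}x$, so the circumcenter is obtained by projecting $0$ onto $\aff(\mathcal{S}(x))$. The packaging differs. The paper cites \cite[Corollary~5.6(ii)]{BOyW2021BregmanCircumBasicTheory} to get $0\in\cap_{i\in\I}\Fix T_{i}\subseteq\overrightarrow{E}_{f}(\mathcal{S}(x))=\overleftarrow{E}_{f}(\mathcal{S}(x))$ and then reads off the conclusion from the pseudo-circumcenter formulas \cref{fact:CircumOperator}\cref{fact:CircumOperator:backps} or \cref{fact:CircumOperator}\cref{fact:CircumOperator:forps} with $z=0$ and $\nabla f=\nabla f^{*}=\Id$, together with the observation that the four circumcenter notions coincide for $f=\frac{1}{2}\norm{\cdot}^{2}$. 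You instead verify equidistance and uniqueness from first principles (normal-cone characterization of $\Pro_{\aff(\mathcal{S}(x))}(0)$ plus the spanning argument in $\aff(\mathcal{S}(x))$) and invoke \cite[Theorem~3.3(ii)]{BOyW2019Isometry}, which the paper mentions only as an alternative route. Both are valid; yours is more self-contained, the paper's is shorter because the general Bregman formulas already do the work.

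One intermediate claim should be corrected: $\overleftarrow{E}_{f}(K)$ and $\overrightarrow{E}_{f}(K)$ are \emph{not} equal to $\mathcal{H}$ when $f=\frac{1}{2}\norm{\cdot}^{2}$. Despite the paper's loose phrasing (\enquote{$\D_{f}(x,q)$ is a singleton}), these are the sets of points equidistant from \emph{all} elements of $K$; this is precisely condition (ii) in \cref{defn:BregCirMap}, and it is what makes the circumcenter a single point rather than all of $\aff(K)$. Had you actually used $\overrightarrow{E}_{f}(K)=\mathcal{H}$, you would have concluded $\overrightarrow{\CCO}_{\mathcal{S}}(x)=\aff(\mathcal{S}(x))$, which is false. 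Fortunately your computation $\norm{T_{i}x-v}^{2}=\norm{x}^{2}-\norm{v}^{2}$ establishes the correct equidistance statement, so the error does not propagate; the statement you actually need, and the one the paper uses, is the inclusion $\cap_{i\in\I}\Fix T_{i}\subseteq\overrightarrow{E}_{f}(\mathcal{S}(x))$.
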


\begin{proof}
		Notice that $\nabla f =\Id$ and $\dom f =\mathcal{H}$ in this case. 
	Then
		$0 \in \cap^{m}_{j=1} \Fix T_{j} \subseteq \overrightarrow{E}_{f}(K) =\overleftarrow{E}_{f}(K) $. Therefore,	the required result follows easily from    \cref{fact:CircumOperator}\cref{fact:CircumOperator:backps}   or \cref{fact:CircumOperator}\cref{fact:CircumOperator:forps}.
\end{proof}	

Henceforth,  suppose 
$T_{1}, \ldots, T_{m}$ are operators from $\mathcal{H}$ to $\mathcal{H}$ with $ \inte \dom f  \cap \left( \cap_{i \in \I} \Fix T_{i}    \right) \neq \varnothing $. Unless stated otherwise, we always set  
\begin{empheq}[box=\mybluebox]{equation} \label{eq:SSx}
\mathcal{S}:= \{\Id, T_{1}, \ldots, T_{m} \}  \quad  \text{and}\quad    (\forall x \in \mathcal{H})  ~ \mathcal{S}(x):= \{x, T_{1}x, \ldots, T_{m}x \}. 
\end{empheq}

\begin{lemma} \label{lemma:Fix}
	Suppose that $f$ is essentially strictly convex. Then the following statements hold.
	\begin{enumerate}
		\item \label{lemma:Fix:forward} $ \Fix \overrightarrow{\CCO}_{\mathcal{S}} = \inte \dom f \cap  \left(  \cap_{i \in \I} \Fix T_{i} \right)$ and $ \Fix \overrightarrow{\CCO}_{\mathcal{S}}^{ps} = \inte \dom f \cap   \left(  \cap_{i \in \I} \Fix T_{i} \right) \cap  \Fix   \nabla f^{*} $.
		\item \label{lemma:Fix:backward} $\Fix \overleftarrow{\CCO}_{\mathcal{S}} =\inte \dom f \cap \left( \cap_{i \in \I} \Fix T_{i} \right)$ and  $\Fix \overleftarrow{\CCO}_{\mathcal{S}}^{ps} 
		=\left( \cap_{i \in \I} \Fix T_{i} \right) \cap \Fix   \nabla f   $.
	\end{enumerate} 
\end{lemma}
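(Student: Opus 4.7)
The plan is to treat all four identities uniformly. For each mapping, unpacking the definition of $x\in\overrightarrow{\CCO}_{\mathcal{S}}(x)$ (resp.\ the other three) via \cref{defn:CCS:Bregman:left,defn:CCS:Bregman:forward,defn:BregCirMap} produces two pieces of information: (a) an \emph{equidistance} statement across $\mathcal{S}(x)=\{x,T_1x,\ldots,T_mx\}$, and (b) an \emph{affine-hull containment} statement, namely $x\in\aff\mathcal{S}(x)$, $x\in\nabla f^*(\aff\mathcal{S}(x))$, $x\in\aff\mathcal{S}(x)$, or $x\in\aff\nabla f(\mathcal{S}(x))$, respectively. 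Since $x\in\mathcal{S}(x)$ and hence $\D_f(x,x)=0$, the equidistance collapses in each case to $\D_f(T_ix,x)=0$ for every $i\in\I$ in the forward cases, and to $\D_f(x,T_ix)=0$ in the backward cases.

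For \cref{lemma:Fix:forward}, combining the equidistance with \cref{fact:PropertD}\cref{fact:PropertD:=0} (where essential strict convexity is used) forces $T_ix=x$ for every $i$; the containment $x\in\aff\mathcal{S}(x)$ is then automatic from $x\in\mathcal{S}(x)$, and together with $x\in\inte\dom f$ (already built into the forward definition through the condition $v\in\inte\dom f$) this yields one inclusion. The reverse inclusion is immediate: if $x\in\inte\dom f\cap\bigcap_{i\in\I}\Fix T_i$, then $\mathcal{S}(x)=\{x\}$ and $x$ trivially satisfies both requirements. The pseudo variant differs only in the containment, which, after $\mathcal{S}(x)$ has been reduced to $\{x\}$, becomes $x\in\{\nabla f^*(x)\}$, i.e.\ $x\in\Fix\nabla f^*$; this explains the extra intersection in the formula.

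\cref{lemma:Fix:backward} follows by a symmetric argument using the backward direction of the Bregman distance. One minor subtlety is that the backward circumcenter operators require $\mathcal{S}(x)\subseteq\inte\dom f$, so each $T_ix$ lies in $\inte\dom f$; this is precisely the setting in which \cref{fact:PropertD}\cref{fact:PropertD:=0} yields $T_ix=x$ from $\D_f(x,T_ix)=0$. The affine-hull containment in the pseudo variant then reduces to $x=\nabla f(x)$, and since $\Fix\nabla f\subseteq\inte\dom f$ the apparent absence of $\inte\dom f$ on the right-hand side of that formula is consistent.

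I do not anticipate a serious technical obstacle: the entire argument rests on the identity $\D_f(x,x)=0$ and a single invocation of essential strict convexity per case. The only care required is to match each of the four definitions against its corresponding fixed-point set and to track whether $\inte\dom f$ should appear explicitly on the right or is already absorbed into $\Fix\nabla f$ or $\Fix\nabla f^*$.
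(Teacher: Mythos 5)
Your proposal is correct and follows essentially the same route as the paper: unpack the fixed-point condition into the equidistance requirement plus the affine-hull containment, collapse the equidistance to $\D_f(T_ix,x)=0$ (resp.\ $\D_f(x,T_ix)=0$) via $\D_f(x,x)=0$, invoke \cref{fact:PropertD}\cref{fact:PropertD:=0} to get $T_ix=x$, and then observe that the pseudo variants' containment reduces to $x\in\Fix\nabla f^*$ (resp.\ $x\in\Fix\nabla f$) once $\mathcal{S}(x)=\{x\}$. Your remarks on where $\inte\dom f$ is absorbed (into $\Fix\nabla f$) and on why $T_ix\in\inte\dom f$ in the backward case are exactly the bookkeeping the paper's proof performs.
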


\begin{proof}
	\cref{lemma:Fix:forward}: Let $z \in \mathcal{H}$.	
	\begin{align*}
	z \in \Fix \overrightarrow{\CCO}_{\mathcal{S}}  \Leftrightarrow & z \in  \overrightarrow{\CCO}_{\mathcal{S}}z  \\
	\Leftrightarrow & z \in \aff(\mathcal{S}(z)) \cap \inte \dom f  \text{ and  } \D_{f} (z,z) =\D_{f} ( T_{1}z,z)   =\cdots =\D_{f} (T_{m}z,z)\\ 	
	\Leftrightarrow & z \in  \inte \dom f\text{ and  }(\forall i \in \I) ~z \in \Fix T_{i} \quad (\text{by \cref{fact:PropertD}})\\ 
	\Leftrightarrow & z \in \inte \dom f \cap  \left(  \cap_{i \in \I} \Fix T_{i} \right),
	\end{align*}	
	which guarantees that $ \Fix \overrightarrow{\CCO}_{\mathcal{S}} = \inte \dom f \cap  \left(  \cap_{i \in \I} \Fix T_{i} \right)$. 
	
	Similarly, 
	\begin{align*}
	z \in \Fix  \overrightarrow{\CCO}_{\mathcal{S}}^{ps}	\Leftrightarrow &  z = \overrightarrow{\CCO}_{\mathcal{S}}^{ps}z    \\
	\Leftrightarrow & z \in \nabla f^{*} \left( \aff \left(\mathcal{S} (z) \right) \right)\cap \inte \dom f   \text{ and  } \D_{f} (z,z) =\D_{f} ( T_{1}z,z)   =\cdots =\D_{f} (T_{m}z,z)\\ 	
	\Leftrightarrow & z \in \Fix \left( \nabla f^{*} \left( \aff  ( \mathcal{S}) \right) \right)  \cap  \inte \dom f\text{ and  }(\forall i \in \I) ~z \in \Fix T_{i} \quad (\text{by \cref{fact:PropertD}})\\ 
	\Leftrightarrow & z \in   \inte \dom f \cap  \left(  \cap_{i \in \I} \Fix T_{i} \right) \cap  \Fix \left( \nabla f^{*} \left( \aff  ( \mathcal{S}) \right) \right),
	\end{align*}
	which deduces that $\Fix \overrightarrow{\CCO}_{\mathcal{S}}^{ps} = \inte \dom f \cap   \left(  \cap_{i \in \I} \Fix T_{i} \right) \cap  \Fix \left( \nabla f^{*} \left( \aff  ( \mathcal{S}) \right) \right) $.
	
	In addition, clearly,  $\Id \in \mathcal{S}$ entails that $ \Fix  \nabla f^{*}  \subseteq   \Fix \left( \nabla f^{*} \left( \aff  ( \mathcal{S}) \right) \right)$. Hence, $ \left(  \cap_{i \in \I} \Fix T_{i} \right) \cap \Fix  \nabla f^{*}  \subseteq  \left(  \cap_{i \in \I} \Fix T_{i} \right) \cap  \Fix \left( \nabla f^{*} \left( \aff  ( \mathcal{S}) \right) \right)$. On the other hand, $\left(\forall y \in   \left(  \cap_{i \in \I} \Fix T_{i} \right) \cap  \Fix \left( \nabla f^{*} \left( \aff  ( \mathcal{S}) \right) \right) \right)$ $y \in \nabla f^{*} \left( \aff  ( \mathcal{S}(y)) \right) =  \nabla f^{*}(y)$, which necessitates that $\left(  \cap_{i \in \I} \Fix T_{i} \right) \cap  \Fix \left( \nabla f^{*} \left( \aff  ( \mathcal{S}) \right) \right) \subseteq  \left(  \cap_{i \in \I} \Fix T_{i} \right) \cap \Fix  \nabla f^{*} $. Therefore, $\left(  \cap_{i \in \I} \Fix T_{i} \right) \cap  \Fix \left( \nabla f^{*} \left( \aff  ( \mathcal{S}) \right) \right)=  \left(  \cap_{i \in \I} \Fix T_{i} \right) \cap \Fix  \nabla f^{*} $. 
	
	Altogether, $\Fix \overrightarrow{\CCO}_{\mathcal{S}}^{ps} = \inte \dom f \cap   \left(  \cap_{i \in \I} \Fix T_{i} \right) \cap \Fix  \nabla f^{*} $.
	
	\cref{lemma:Fix:backward}: The proof is similar to the proof of \cref{lemma:Fix:forward} above. 
\end{proof}

 \subsection*{Bregman isometry and Browder's Demiclosedness Principle}
 In this subsection, we generalize the traditional isometry and Browder's Demiclosedness Principle from the Euclidean distance to general Bregman distances and 
investigate the Bregman isometry and Browder's Demiclosedness Principle, which play critical roles in the main result in this section. The Bregman isometry and Browder's Demiclosedness Principle are interesting in their own right.
 \begin{definition} \label{defn:BregIsometry}
 	Let $\mathcal{C} $ be a nonempty subset of $\mathcal{H}$ and let $T: \mathcal{C} \to \mathcal{H} $. We say  $T$ is \emph{Bregman isometric} (or a \emph{Bregman isometry}) w.r.t. $f$, if  $\left( \forall (x,y) \in \dom f \times \inte \dom f  \right)$ $\left( Tx,Ty \right) \in \dom f \times \inte \dom f $, and
 	\begin{align} \label{eq:defn:BregIsometry}
 	\left(\forall \{x,y\}  \subseteq \dom f \times \inte \dom f \right) ~ \D_{f} \left( Tx, T y   \right) = \D_{f}(x,y).
 	\end{align} 
 \end{definition}	
 In view of the definition, it is trivial that given an arbitrary function $g$ satisfying the statements of \cref{defn:BregmanDistance},  the identity operator $\Id$ is Bregman isometric w.r.t. $g$.
 
 Notice that if $f= \frac{1}{2}\norm{\cdot}^{2}$, then the Bregman isometry deduces the traditional isometry $($see, e.g., \cite[Lemma~2.23]{BOyW2019Isometry} for examples of isometries under the Euclidean distance$)$.

 	 The following definition is a generalization of the well-known Browder's Demiclosedness Principle \cite[Theorem~3(a)]{Browder1968}.  In view of \cite[Corollary~4.25]{BC2017}, when $f =\frac{1}{2} \norm{\cdot}^{2}$, the Bregman Browder's Demiclosedness Principle  holds for all nonexpansive operators. 
 
 \begin{definition} \label{defn:Browder}
 	Let $\mathcal{C} $ be a nonempty subset of $\mathcal{H}$, let $T: \mathcal{C} \to \mathcal{H} $ and let $x \in \inte \dom f$. We say the \emph{Bregman Browder's demiclosedness principle associated with $f$ holds at $x \in C \cap \inte \dom f$ for $T$} if for every sequence $\left( x_{k} \right)_{k \in \mathbb{N}} $ in $ \inte \dom f$, $ \left( T x_{k} \right)_{k \in \mathbb{N}} $ in $ \inte \dom f$ and
 	\begin{align} \label{eq:defn:Browder}
 	\begin{rcases}
 	x_k \weakly  x\\
 	\D_{f}(x_k, Tx_k) \to 0
 	\end{rcases}
 	\Rightarrow x \in \Fix T.	
 	\end{align} 
 	In addition, we say   \emph{$T$ is $f$-demiclosed}, if the Bregman Browder's demiclosedness principle associated with $f$ holds for every $x \in C \cap \inte \dom f$ for $T$. 
 \end{definition}
 
 Because $\Fix \Id = \mathcal{H}$, given an arbitrary function $g$  satisfying the statements of \cref{defn:BregmanDistance}, $\Id$  is $g$-demiclosed.

 The following examples are used to illustrate the two new concepts above. 
 \begin{example}  \label{prop:Texamples}
 	Suppose that $\mathcal{H} =\mathbb{R}^{n}$. 	Denote by  $(\forall x \in \mathbb{R}^{n} )$ $x= (x_{i})^{n}_{i=1} $. Define $f :x \mapsto   -\sum^{n}_{i=1}   \ln (x_{i})  $, with $\dom f =\left]0, +\infty\right[^{n} $, which is the Burg entropy. 
 	
 	Then the following statements hold. 
 	
 	\begin{enumerate}
 			\item  \label{prop:Texamples:f} $f$ is Legendre  such that $\dom f $ and $\dom f^{*}$ are open. 
 			
 		\item \label{prop:Texamples:ci}  Set $\J := \{ 1, \ldots,n  \}$.
 		Let $(c_{i})_{i \in \J}  \in \mathbb{R}^{n}$ with $(\forall i \in \J)$ $c_{i} \in \mathbb{R}_{++}$. Define $T : \mathbb{R}^{n} \to \mathbb{R}^{n} : (x_{i})_{i \in \J}  \mapsto (c_{i} x_{i})_{i \in \J} $. Then:
 		\begin{enumerate}			  
 			\item \label{prop:Texamples:isometry} $T$ is Bregman isometric w.r.t. $f$.
 			\item  \label{prop:Texamples:browder} Suppose that there exists $  j \in \J$  such that $c_{j} \neq 1 $. Then $T$ is $f$-demiclosed.
 		\end{enumerate}
 	\item Suppose that   $\mathcal{H} =\mathbb{R} $. Let $c \in \mathbb{R}_{++}$ and $\mu \in \mathbb{R}_{++} \smallsetminus \{1\}$. Define $T : \mathbb{R}  \to \mathbb{R} : t \mapsto c t^{\mu} $. Then: 
 	\begin{enumerate}
 		\item \label{prop:Texamples:cap}  $\inte \dom f \cap   \Fix T = \{ c^{- \frac{1}{\mu-1}}\}$, which is nonempty closed and convex.
 		\item \label{prop:Texamples:BBDP}  $T$ is $f$-demiclosed.  		
 	\end{enumerate}
 	\end{enumerate}
 	
 \end{example}
 
 \begin{proof}
 	\cref{prop:Texamples:f}: This is immediately from \cite[Examples~2.1]{BC2003}.

 	\cref{prop:Texamples:isometry}: This is clear from  \cref{defn:BregmanDistance} and the definitions of $f$ and $T$.
 	
 	 \cref{prop:Texamples:browder}: Let $\left( x^{(k)} \right)_{k \in \mathbb{N}} $ be in $\dom f$.
 	 Because  $(\forall x \in  \dom f)$ $Tx \in    \dom f$, we have $\left( T x^{(k)} \right)_{k \in \mathbb{N}} $ is a sequence in $\dom f$.

Suppose that $x^{(k)} \to \bar{x}$ with $\bar{x} := \left( \bar{x}_{i} \right)_{i \in \J} \in \left]0, +\infty\right[^{n}$. Denote by $(\forall k \in \mathbb{N})$ $x^{(k)} := \left( x^{(k)}_{i} \right)_{i \in \J} $. Now,
\begin{align*}
	\D_{f}(x^{(k)}, Tx^{(k)}) & = f (x^{(k)}) - f (T x^{(k)}) - \innp{\nabla f ( T x^{(k)}) , x^{(k)} -T x^{x}} \\
	& =  -\sum^{n}_{i=1}   \ln (x^{(k)}_{i})  + \sum^{n}_{i=1}   \ln (c_{i }x^{(k)}_{i}) + \sum^{n}_{i=1} \frac{x^{(k)}_{i}  -c_{i }x^{(k)}_{i} }{c_{i }x^{(k)}_{i} } \\
	& =  \sum^{n}_{i=1}   \ln \frac{ c_{i }x^{(k)}_{i} }{ x^{(k)}_{i}}   + \sum^{n}_{i=1} \frac{x^{(k)}_{i}  -c_{i }x^{(k)}_{i} }{c_{i }x^{(k)}_{i} },
\end{align*}
which implies that 
\begin{align*}
\D_{f}(x^{(k)}, Tx^{(k)})  \to \sum^{n}_{i=1}  \left(   \ln  ( c_{i } )    +   \frac{ 1 }{c_{i } } -1  \right).
\end{align*}
On the other hand,  consider the function $g : \mathbb{R}_{++} \to \mathbb{R} : t \mapsto \ln (t) + \frac{1}{t} -1 $.  By some easy calculus,  $(\forall  t  \in \mathbb{R}_{++}  \smallsetminus \{1\})$ $ g(t)  > g(1) =0$. Because there exists $  j \in \J$  such that $c_{j} \neq 1 $, we know that $\sum^{n}_{i=1}  \left(   \ln  ( c_{i } )    +   \frac{ 1 }{c_{i } } -1  \right)  > 0$. 
Thus it never holds that $\D_{f}(x^{(k)}, Tx^{(k)}) \to 0$, and so, vacuously, \cref{eq:defn:Browder} holds.

\cref{prop:Texamples:cap}: This is trivial. 

 \cref{prop:Texamples:BBDP}: Let $a \in \left]0, +\infty\right[\,$. Let $( a_{k})_{k \in \mathbb{N}} $ be in $ \inte \dom f$ such that $a_{k} \to a$. 
 Then clearly $ ( Ta_{k})_{k \in \mathbb{N}} $ is a sequence in $  \left]0, +\infty\right[\,$. Suppose that $ \D_{f}(a_k, Ta_k) \to 0$. Note that 
 \begin{align*}
 \D_{f}(a_k, Ta_k)  = - \ln(a_{k}) + \ln (c a_{k}^{\mu}) +\frac{a_{k} - c a_{k}^{\mu}}{ c a_{k}^{\mu}} \to   \ln(ca^{\mu}) -\ln(a) + \frac{a^{1-\mu}}{c} -1. 
 \end{align*}
 Denote by  $h: \mathbb{R}_{++} \to \mathbb{R} : t \mapsto\ln(ct^{\mu}) -\ln(t) + \frac{t^{1-\mu}}{c} -1$.  Then $(\forall t \in \mathbb{R}_{++} )$ $h'(t) =(\mu-1) (\frac{1}{t} - \frac{1}{ct^{\mu}})$.
 Then by considering the two cases $\mu >1$ and $\mu <1$ separately, we easily get that 
 $\ln(ca^{\mu}) -\ln(a) + \frac{a^{1-\mu}}{c} -1=0$ implies that $a=ca^{\mu}$, that is, $a =c^{- \frac{1}{\mu-1}}  \in \Fix T$. Altogether, by \cref{defn:Browder},  $T$ is $f$-demiclosed.
 \end{proof}
Consider our examples $f$ and $T$ in \cref{prop:Texamples}\cref{prop:Texamples:ci}.  Notice that if there exists $  j \in \J$  such that $c_{j} \neq 1 $,  then  $\inte \dom f \cap  \left(  \cap_{i \in \I} \Fix T_{i} \right) =\varnothing$. 

The following example of $f$ and $T$ satisfies all requirements in   \cref{theorem:xkConvergence} below.
\begin{example} \label{prop:FDE}
	Suppose that $\mathcal{H} =\mathbb{R}^{n}$. Denote by  $(\forall x \in \mathbb{R}^{n} )$ $x= (x_{i})^{n}_{i=1} $. 	 Define $f :x \mapsto   \sum^{n}_{i=1} x_{i} \ln  (x_{i}) + (1-x_{i} ) \ln(1-x_{i} )$, with $\dom f =\left[0, 1\right]^{n} $, which is the Fermi-Dirac entropy. 
	\footnotemark
	\footnotetext{Here and elsewhere, we use the convention that $0 \ln (0) =0$.}
	 Set $\J := \{ 1, \ldots,n  \}$. 
	Then the following statements hold.
	\begin{enumerate}
		\item  \label{prop:FDE:f} $f$ is Legendre with $\dom f^{*}$ open, and allows forward Bregman projections.
		
		\item \label{prop:FDE:BBDP:Id}   $\Id$  is $f$-demiclosed.  	 
		\item Define $T: \mathbb{R}^{n} \to \mathbb{R}^{n} : (x_{i})^{n}_{i=1}  \mapsto (1- x_{i})^{n}_{i=1}  $. Then:
	\begin{enumerate}
	\item  \label{prop:FDE:isometry}  $T$ is Bregman isometric w.r.t. $f$.
	\item   \label{prop:FDE:BBDP} $T$ is $f$-demiclosed.  		
	\item  \label{prop:FDE:FixT} $ \Fix T \cap \inte \dom f  =\{ (\frac{1}{2})_{i \in \J} \} $ is nonempty, closed and convex. 
	\end{enumerate}
	\item \label{prop:FDE:Ti} Let $\Lambda $ be a subset of $J$. Define $T: \mathbb{R}^{n} \to \mathbb{R}^{n}$ by $\left( \forall x  \in \mathbb{R}^{n} \right)$ $T x := (y_{i})^{n}_{i=1}$ where  $(\forall i \in \J \smallsetminus \Lambda )$ $ y_{i} =x_{i}$  and $(\forall i \in   \Lambda )$ $y_{i} = (1-x_{i})$. 
		\begin{enumerate}
\item  \label{prop:FDE:TiFixT} $\Fix T = \left\{ x \in \mathbb{R}^{n} ~:~  (\forall i \in \Lambda)   x_{i} = \frac{1}{2}  \right\}$.
\item  \label{prop:FDE:TiBI} $T$ is Bregman isometric w.r.t. $f$.

\item  \label{prop:FDE:TiBBDP} $T$ is $f$-demiclosed.  		
	\end{enumerate}
			\end{enumerate}
\end{example}

\begin{proof}
	\cref{prop:FDE:f}:  This follows immediately from \cite[Examples~2.1 and 2.7]{BC2003}.
	
\cref{prop:FDE:BBDP:Id}: This is trivial. 
	
	\cref{prop:FDE:isometry}: This follows easily from  \cref{defn:BregmanDistance} and the definitions of $f$ and $T$.
	
	\cref{prop:FDE:BBDP}:  Let $\bar{x} \in \left] 0, 1 \right[^{n} $ and let $( x^{(k)} )_{k \in \mathbb{N}} $ be a sequence in $\left] 0, 1 \right[^{n} $ such that $x^{(k)} \to \bar{x}$. Denote by  $\bar{x} := (\bar{x} _{i})^{n}_{i=1} $ and  $(\forall k \in \mathbb{N})$ $x^{(k)} = (x^{(k)}_{i})_{i \in \J} $.  In view of the definition of $T$, 
	 $( Tx^{(k)} )_{k \in \mathbb{N}} $ is also a sequence in  $\left] 0, 1 \right[^{n} $.
	 Suppose that $\D_{f}(x_k, Tx_k) \to 0$. Notice that 
	 \begin{align*}
	 \D_{f}(x^{(k)}, Tx^{(k)} ) & =  \sum^{n}_{i=1} \left(x^{(k)}_{i} \ln  (x^{(k)}_{i}) + (1-x^{(k)}_{i} ) \ln(1-x^{(k)}_{i} ) \right) -   \sum^{n}_{i=1} \left(  (1-x^{(k)}_{i} ) \ln(1-x^{(k)}_{i} ) +x^{(k)}_{i} \ln  (x^{(k)}_{i})  \right) \\
	 & \quad ~ - \sum^{n}_{i=1} \left(x^{(k)}_{i} - ( 1-x^{(k)}_{i})  \right)  \ln \left(  \frac{  1-x^{(k)}_{i} }{ x^{(k)}_{i} }\right) \\
	 &= - \sum^{n}_{i=1} \left(x^{(k)}_{i} - ( 1-x^{(k)}_{i})  \right)  \ln \left(  \frac{  1-x^{(k)}_{i} }{ x^{(k)}_{i} }\right)  \to    - \sum^{n}_{i=1} \left(\bar{x}_{i} - ( 1-\bar{x}_{i})  \right)  \ln \left(  \frac{  1-\bar{x}_{i} }{ \bar{x}_{i} }\right).
	 \end{align*}  
	 Consider the function $g: \left] 0, 1 \right[ \to \mathbb{R}: t \mapsto  - \left( t - ( 1-t  )  \right)  \ln \left(  \frac{  1- t }{ t }\right)$. Since $(\forall t \in  \left] 0, 1 \right[\,)$ $g''(t) = \frac{1}{(t-1)^{2}t^{2}} >0$ and $g'(\frac{1}{2}) =0$,  thus $\left(\forall t \in  \left] 0, 1 \right[ \smallsetminus \{ \frac{1}{2} \}  \right)$ $g(t) > 0$. Hence,   $  - \sum^{n}_{i=1} \left(\bar{x}_{i} - ( 1-\bar{x}_{i})  \right)  \ln \left(  \frac{  1-\bar{x}_{i} }{ \bar{x}_{i} }\right) =0$ implies that $\bar{x} =(\frac{1}{2})_{i \in \J} \in \Fix T$. Therefore, the assertion is true. 
	 
	 	\cref{prop:FDE:FixT}: The required results are trivial. 
	 	
	 \cref{prop:FDE:Ti}: According to the definitions of $T$ and $f$, it is easy to see that 
	 \begin{align} \label{eq:prop:FDE}
	 (\forall x \in \dom f) (\forall y \in \dom f) \quad f(x) -f(Tx) =0   \text{ and }   \Innp{\nabla f (Ty), Tx -Ty} =\Innp{\nabla f (y),  x - y}. 
	 \end{align}
	 
	 \cref{prop:FDE:TiFixT}: This is clear. 
	  
\cref{prop:FDE:TiBI}: This follows immediately from \cref{defn:BregIsometry} and \cref{defn:BregmanDistance}.
	 
\cref{prop:FDE:TiBBDP}: Applying a proof similar to that of	\cref{prop:FDE:BBDP} and invoking  \cref{prop:FDE:TiFixT}, we obtain the required result.
\end{proof}

\subsection*{Convergence of forward Bregman circumcenter methods} 

Motivated by the \cref{lemma:Fix},    to find a point in the intersection $      \cap_{i \in \I} \Fix T_{i}  $, we prefer the backward and forward Bregman circumcenter mappings induced by $\mathcal{S}$ rather than the backward and forward Bregman pseudo-circumcenter mappings induced by $\mathcal{S}$ with smaller fixed point sets.  
Moreover, notice that comparing with the hypothesis of  \cref{fact:CircumOperator}\cref{fact:CircumOperator:back} on the existence of the backward Bregman circumcenter, we don't have a strong requirement for $f$ in the   \cref{fact:CircumOperator}\cref{fact:CircumOperator:for} on the existence of the forward Bregman circumcenter.   
Therefore, we consider only  the convergence of sequences of iterations generated by forward Bregman circumcenter mappings in this work.

The \emph{forward Bregman circumcenter method} generates the sequence of iterations of the forward Bregman circumcenter  mapping.

According to \cite[Theorem~4.3(ii)]{BOyW2018Proper} and \cite[Theorem~3.3(ii)]{BOyW2019Isometry}, the circumcenter mappings induced by finite sets of isometries (see \cite[Definition~2.27]{BOyW2019Isometry} for the exact definition), are special operators $G$ satisfying conditions in \cref{theorem:conver:G} below.
\begin{theorem}\label{theorem:conver:G}
	Suppose that $f  $ is  Legendre.
	Let $\mathcal{C}$ be a nonempty subset of $\inte \dom f$. Suppose that $G: \mathcal{C} \to \mathcal{C} $ satisfies that $\Fix G \cap \inte \dom f \neq \varnothing$ and $\Fix G$ is a closed convex subset of $\mathcal{H}$, and that $(\forall x \in \mathcal{C} \cap \inte \dom f )$ $( \forall z \in \Fix G \cap \inte \dom f )$ $ G (x)  =\overleftarrow{\Pro}^{f}_{\aff(\mathcal{S}(x))} z $. Let $x \in \mathcal{C} \cap \inte \dom f$.  Then the following statements hold.
	\begin{enumerate}
		\item \label{theorem:conver:G:sequence} $(G^{k}x )_{k \in \mathbb{N}}$ is a well-defined sequence in $\mathcal{C} \cap \inte \dom f$.
		\item \label{theorem:conver:G:D}  $(G^{k}x )_{k \in \mathbb{N}}$  is forward Bregman  monotone with respect to $   \Fix G $. Consequently, $	\left( \forall z \in  \Fix G \cap \inte \dom f \right)$ $ \left(  \D_{f} \left( G^{k}x ,z \right)  \right)_{k \in \mathbb{N}} $ converges.  
		\item \label{theorem:conver:G:limD} $\lim_{k \to \infty}$ $ \D_{f} \left(  G^{k}x, G^{k+1}x   \right) =0$.
		\item  \label{theorem:conver:G:conv} Suppose that $G$ is $f$-demiclosed  and that all weak sequential cluster points of $(G^{k}x)_{k \in \mathbb{N}}$ lie in $\inte \dom f$. Then $(G^{k}x)_{k \in \mathbb{N}}$ weakly converges to a point in $\inte \dom f \cap \Fix G$.  
	\end{enumerate}
\end{theorem}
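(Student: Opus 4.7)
The plan is to prove the four items sequentially, exploiting the fact that $G$ acts, along a trajectory, as a backward Bregman projection of a fixed point onto the affine hull of $\mathcal{S}(G^k x)$, a finite-dimensional (hence closed) affine subspace meeting $\inte \dom f$. Throughout, fix any auxiliary $z \in \Fix G \cap \inte \dom f$; the hypothesis states that the projection is independent of this choice.

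For \cref{theorem:conver:G:sequence}, since $f$ is Legendre and $\aff(\mathcal{S}(x)) \cap \inte \dom f \neq \varnothing$ (it contains $\mathcal{S}(x) \subseteq \mathcal{C} \subseteq \inte \dom f$), \cref{fact:Pleft} ensures that $\overleftarrow{\Pro}^{f}_{\aff(\mathcal{S}(x))} z$ is a singleton in $\aff(\mathcal{S}(x)) \cap \inte \dom f$; combined with $G(\mathcal{C}) \subseteq \mathcal{C}$, induction yields the well-definedness of the orbit in $\mathcal{C} \cap \inte \dom f$.

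For \cref{theorem:conver:G:D}, given $c \in \Fix G \cap \inte \dom f$, apply the defining hypothesis with $z := c$ to get $G^{k+1}x = \overleftarrow{\Pro}^{f}_{\aff(\mathcal{S}(G^k x))}(c)$, which by definition minimizes $u \mapsto \D_f(u,c)$ over $\aff(\mathcal{S}(G^k x))$. Since $\Id \in \mathcal{S}$ forces $G^k x \in \mathcal{S}(G^k x) \subseteq \aff(\mathcal{S}(G^k x))$, we deduce $\D_f(G^{k+1}x,c) \leq \D_f(G^k x,c)$, i.e.\ forward Bregman monotonicity with respect to $\Fix G$; convergence of $(\D_f(G^k x,z))_{k\in\mathbb{N}}$ then follows from \cref{theorem:ConvBregBackwMonot}\cref{theorem:ConvBregBackwMonot:D}. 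For \cref{theorem:conver:G:limD}, the crucial ingredient is the generalized Pythagorean \emph{equality} for backward Bregman projections onto affine sets: using the first-order optimality condition $\innp{\nabla f(G^{k+1}x) - \nabla f(z), u - G^{k+1}x} = 0$ for every $u \in \aff(\mathcal{S}(G^k x))$, a direct expansion of $\D_f$ yields
\begin{equation*}
\D_f(G^k x, z) = \D_f(G^k x, G^{k+1}x) + \D_f(G^{k+1}x, z),
\end{equation*}
and since the left side converges while the last term also converges, $\D_f(G^k x, G^{k+1}x) \to 0$.

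For \cref{theorem:conver:G:conv}, I would appeal to \cref{theorem:ConvBregBackwMonot}\cref{theorem:ConvBregBackwMonot:conv} with $C := \Fix G$ (closed convex by hypothesis, meeting $\inte \dom f$), which reduces weak convergence to verifying that every weak sequential cluster point lies in $\Fix G \cap \inte \dom f$. By hypothesis such a cluster point $\bar{x}$ lies in $\inte \dom f$; picking a subsequence $G^{k_j}x \weakly \bar{x}$, both $(G^{k_j}x)_j$ and $(G(G^{k_j}x))_j = (G^{k_j+1}x)_j$ lie in $\inte \dom f$, and $\D_f(G^{k_j}x, G(G^{k_j}x)) \to 0$ by \cref{theorem:conver:G:limD}; the Bregman Browder demiclosedness principle applied at $\bar{x}$ then forces $\bar{x} \in \Fix G$, concluding the proof.

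The main technical obstacle is the Pythagorean equality in \cref{theorem:conver:G:limD}: the inequality version (which holds for projections onto arbitrary closed convex sets) is insufficient because we need to express $\D_f(G^k x, G^{k+1}x)$ as a \emph{difference} of two terms from the convergent sequence of \cref{theorem:conver:G:D}. The affine-subspace structure of $\aff(\mathcal{S}(G^k x))$ is precisely what upgrades the inequality to an equality and makes this bookkeeping work; the remainder of the argument is a fairly standard assembly of \cref{theorem:ConvBregBackwMonot} with the demiclosedness hypothesis.
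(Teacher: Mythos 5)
Your proposal is correct and follows essentially the same route as the paper: well-definedness via \cref{fact:Pleft} and induction, forward Bregman monotonicity from the minimizing property of the backward Bregman projection, the Pythagorean \emph{equality} on the affine hull to force $\D_{f}(G^{k}x,G^{k+1}x)\to 0$, and then \cref{theorem:ConvBregBackwMonot}\cref{theorem:ConvBregBackwMonot:conv} combined with demiclosedness for the weak convergence. The only cosmetic difference is that the paper imports the Pythagorean identity from \cite[Theorem~2.9(i)]{BOyW2021BregmanCircumBasicTheory} and telescopes the resulting sum, whereas you re-derive it from the first-order optimality condition and pass to the limit of the difference of two convergent sequences; both are equivalent.
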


\begin{proof}
	Invoking the definition of the operator $G: \mathcal{C} \to \mathcal{C} $ and \cref{fact:Pleft}, we observe that
	\begin{align}\label{eq:theorem:conver:G:def}
	(\forall y \in  \mathcal{C} \cap \inte \dom f )  ~( \forall z \in \Fix G \cap \inte \dom f ) \quad G (y)  =\overleftarrow{\Pro}^{f}_{\aff(\mathcal{S}(y))} z \in  \mathcal{C} \cap \inte \dom f.
	\end{align}
	Hence, use $\{ y, G(y)  \} \subseteq \aff(\mathcal{S}(y))$ and 
	apply \cite[Theorem~2.1(i)]{BOyW2021BregmanCircumBasicTheory}  with $U=\aff\left(\mathcal{S}(x) \right)$ to yield that  
	\begin{align}\label{eq:theorem:conver:G}
	(\forall y \in \mathcal{C} \cap \inte \dom f )~	(\forall z \in \Fix G \cap \inte \dom f)  \quad  \D_{f} (y ,z) = \D_{f} (y,G(y))  +\D_{f} (G(y),z). 
	\end{align}
	
	\cref{theorem:conver:G:sequence}: This is clear from \cref{eq:theorem:conver:G:def} by induction.
	
	\cref{theorem:conver:G:D}:  Taking \cref{eq:theorem:conver:G} and \cref{defn:BregmanDistance} into account, we deduce that 
	\begin{align}  
	(\forall y \in \mathcal{C} \cap \inte \dom f )~	\left(\forall z \in \inte \dom f \cap  \Fix G \right)  \quad  \D_{f} (y,z)  \geq \D_{f} \left(  Gy ,z \right) .
	\end{align}
	Employing  \cref{theorem:conver:G:sequence}  above and \cref{defn:BregBackMonotone}, we know that $(G^{k}x )_{k \in \mathbb{N}}$  is forward Bregman  monotone with respect to $   \Fix G $, which, combining with \cref{theorem:ConvBregBackwMonot}\cref{theorem:ConvBregBackwMonot:D}, implies the convergence assertion.

	\cref{theorem:conver:G:limD}:  Let $z \in \Fix G \cap \inte \dom f$.	For every $k \in \mathbb{N}$, substitute $y$  in \cref{eq:theorem:conver:G} by $G^{k}x$ to deduce that $ \D_{f} \left(G^{k}x,z \right) =  \D_{f} \left( G^{k}x, G^{k+1}x  \right) +\D_{f} \left(  G^{k+1}x ,z \right)$, which implies that
	\begin{align*}
	\sum_{k \in \mathbb{N}} \D_{f} \left(  G^{k}x, G^{k+1}x  \right) =  \D_{f} \left( x,z \right) - \lim_{t \to \infty} \D_{f} \left( G^{t}x ,z \right) < \infty,
	\end{align*}
	where the existence of the limit $\lim_{t \to \infty} \D_{f} \left( G^{t}x ,z \right) $  is from  \cref{theorem:conver:G:D}. Therefore,   \cref{theorem:conver:G:limD} holds.

	\cref{theorem:conver:G:conv}:  Let $\bar{z} $ be a weak sequential cluster point of $(G^{k}x )_{k \in \mathbb{N}}$.  Then there exists a subsequence 
	$(G^{k_{j}}x  )_{j \in \mathbb{N}}$ of $(G^{k}x  )_{k \in \mathbb{N}}$ such that $G^{k_{j}}x  \weakly \bar{z}$.
	Notice that, in view of \cref{theorem:conver:G:limD} above, $\D_{f} \left(  G^{k}x, G^{k+1}x \right)    \to 0$.
	Due to the assumption, $\bar{z} \in \inte \dom f$. Hence, utilizing the assumption that  $G$ is $f$-demiclosed, we derive that
	\begin{align*}
	\begin{rcases}
	G^{k_{j}}x    \weakly  \bar{z}\\
	\D_{f} \left( G^{k_{j}}x ,G\left( G^{k_{j} }x  \right) \right) \to 0
	\end{rcases}
	\Rightarrow \bar{z}  \in \Fix G ,
	\end{align*} 
	which implies that all weak sequential cluster points of  $(G^{k}x  )_{k \in \mathbb{N}}$  lie in $ \Fix G  \cap \inte \dom f  $, since $\bar{z}$ is an arbitrary cluster point of $(G^{k}x  )_{k \in \mathbb{N}}$. 
	Therefore, via  \cref{theorem:ConvBregBackwMonot}\cref{theorem:ConvBregBackwMonot:conv},  $(G^{k}x  )_{k \in \mathbb{N}}$ weakly  converges to some point in $\inte \dom f \cap \Fix G$.
\end{proof}

 It is clear that the  characterization \cref{eq:theorem:conver:G} of the backward Bregman projection $\overleftarrow{\Pro}^{f}_{\aff(\mathcal{S}(y))} z$ plays an essential role in the proof of \cref{theorem:conver:G} which is critical to prove our main result \cref{theorem:Convergence} below.  This combined with \cite[Theorem~2.1(ii)]{BOyW2021BregmanCircumBasicTheory} suggests  that the idea of the proof of \cref{theorem:conver:G} does not work if we redefine $G: \mathcal{C} \to \mathcal{C} $ in \cref{theorem:conver:G}  by $(\forall x \in \mathcal{C} \cap \inte \dom f )$ $( \forall z \in \Fix G \cap \inte \dom f )$ $ G (x)  =\overrightarrow{\Pro}^{f}_{\aff(\mathcal{S}(x))} z $. On the other hand, in view of \cref{fact:CircumOperator}\cref{fact:CircumOperator:back}$\&$\cref{fact:CircumOperator:for}, under some conditions, certain backward Bregman projections (resp.\,forward Bregman projections) are forward Bregman circumcenters (resp.\,backward Bregman circumcenters). Therefore, it is easier to work on the forward Bregman circumcenter method than the backward Bregman circumcenter method.

The following \cref{theorem:Convergence}\cref{theorem:Convergence:Browder:CCS} and  \cref{theorem:Convergence}\cref{theorem:Convergence:Browder:Cluster} reduce to   \cite[Theorem~3.17]{BOyW2018Proper} and \cite[Theorem~4.7]{BOyW2019Isometry}, respectively,  when $f=\frac{1}{2} \norm{\cdot}^{2}$.  Notice that the circumcenter mappings studied in \cite{BOyW2019Isometry}, \cite{BOyW2019LinearConvergence}, \cite{BOyW2020BAM}, and \cite{Ouyang2020Finite} under the Euclidean distance are all single-valued operators, and that all of the examples of backward and forward Bregman (pseudo)-circumcenters presented in \cite{BOyW2021BregmanCircumBasicTheory} are singletons. So, our assumption \enquote{$\overrightarrow{\CCO}_{\mathcal{S}}$ is at most single-valued on $\inte \dom f$} in the following \cref{theorem:Convergence} is not too restrictive. In addition, it is clear that if $f= \frac{1}{2}\norm{\cdot}^{2}$, then   the following condition \cref{eq:triangleineq} is a direct result from the triangle inequality.  

\begin{theorem} \label{theorem:Convergence}
	Suppose that $f$ is Legendre,   that 	$\inte \dom f \cap  \left(  \cap_{i \in \I} \Fix T_{i} \right)$ is nonempty, that $ \left(  \cap_{i \in \I} \Fix T_{i} \right)$ is  closed and convex,    that $(\forall i \in \I)$ $T_{i}$ is Bregman isometric w.r.t.\,$f$, 
	and that $\overrightarrow{\CCO}_{\mathcal{S}}$ is at most single-valued on $\inte \dom f$. Let $y \in \inte \dom f$. Then the following statements hold.
	\begin{enumerate}
		\item \label{theorem:Convergence:Ef} $(\forall x \in   \dom f)$ $\inte \dom f \cap  \left(  \cap_{i \in \I} \Fix T_{i} \right)   \subseteq  \overrightarrow{E}_{f}(\mathcal{S}(x) ) $.
		\item  \label{theorem:Convergence:CCSEq}  $\left( \forall z \in \inte \dom f \cap  \left(  \cap_{i \in \I} \Fix T_{i} \right) \right)$ $\overrightarrow{\CCO}_{\mathcal{S}}y =\overleftarrow{\Pro}^{f}_{\aff(\mathcal{S}(y))}(z) \in \inte \dom f$.
		\item \label{theorem:Convergence:welldefined} $(\overrightarrow{\CCO}_{\mathcal{S}}^{k}y )_{k \in \mathbb{N}}$ is a well-defined sequence in $\inte \dom f$.
		\item \label{theorem:Convergence:BregMonot} $(\overrightarrow{\CCO}_{\mathcal{S}}^{k}y )_{k \in \mathbb{N}}$  is forward Bregman  monotone with respect to $     \cap_{i \in \I} \Fix T_{i}  $. Consequently, $	( \forall z \in \inte \dom f \cap  \left(  \cap_{i \in \I} \Fix T_{i} \right) )$ $ \left(  \D_{f} \left( \overrightarrow{\CCO}_{\mathcal{S}}^{k}y ,z \right)  \right)_{k \in \mathbb{N}}$ converges.  
		\item \label{theorem:Convergence:sequence} $\lim_{k \to \infty}$ $ \D_{f} \left(  \overrightarrow{\CCO}_{\mathcal{S}}^{k}y, \overrightarrow{\CCO}_{\mathcal{S}}^{k+1}y   \right) =0$.
		
		\item \label{theorem:Convergence:Browder} Suppose that   $(\forall i \in \I)$    $T_{i}$ is $f$-demiclosed,
  that for every sequence $( u_{k})_{k \in \mathbb{N}} $ in $ \inte \dom f$,  and that
	\begin{align} \label{eq:triangleineq}	 			 
	\begin{rcases}
	\D_{f} (u_{k}, \overrightarrow{\CCO}_{\mathcal{S}} u_{k} ) \to 0\\
	(\forall i \in \I) ~ \D_{f} (T_{i}u_{k}, \overrightarrow{\CCO}_{\mathcal{S}}u_{k} ) \to 0  
	\end{rcases}
	\Rightarrow  (\forall i \in \I) ~ \D_{f} (u_{k}, T_{i}u_{k} ) \to 0.
	\end{align}
	Then the following hold.
	\begin{enumerate}
		\item \label{theorem:Convergence:Browder:CCS}  $\overrightarrow{\CCO}_{\mathcal{S}}$ is $f$-demiclosed. 
		\item \label{theorem:Convergence:Browder:Cluster} Suppose that all weak sequential cluster   points of $(\overrightarrow{\CCO}_{\mathcal{S}}^{k}y )_{k \in \mathbb{N}}$ lie in $ \inte \dom f$. Then   $(\overrightarrow{\CCO}_{\mathcal{S}}^{k}y  )_{k \in \mathbb{N}}$ weakly converges to some point in $\inte \dom f \cap  \left(  \cap_{i \in \I} \Fix T_{i} \right) $.
	\end{enumerate}  
	
\end{enumerate}	
\end{theorem}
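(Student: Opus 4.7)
The plan is to reduce (iii)--(v) to \cref{theorem:conver:G} and to handle (vi) by exploiting the equal-distance condition built into the definition of $\overrightarrow{\CCO}_{\mathcal{S}}$; everything hinges on the reduction in (ii). For (i), fix $z \in \inte \dom f \cap \left( \cap_{i \in \I} \Fix T_{i} \right)$ and $x \in \dom f$. Since $T_{i}z = z$ and $T_{i}$ is Bregman isometric, \cref{defn:BregIsometry} yields $T_{i}x \in \dom f$ and $\D_{f}(T_{i}x,z) = \D_{f}(T_{i}x, T_{i}z) = \D_{f}(x,z)$; hence every element of $\mathcal{S}(x)$ has Bregman distance $\D_{f}(x,z)$ to $z$, so $z \in \overrightarrow{E}_{f}(\mathcal{S}(x))$. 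Specializing to $x = y$ shows that $z$ is $\D_{f}$-equidistant from every point of $\mathcal{S}(y)$, and together with $y \in \aff(\mathcal{S}(y)) \cap \inte \dom f$ this lets me invoke \cref{fact:CircumOperator}\cref{fact:CircumOperator:for} to obtain $\overleftarrow{\Pro}^{f}_{\aff(\mathcal{S}(y))}(z) \in \overrightarrow{\CCO}(\mathcal{S}(y)) = \overrightarrow{\CCO}_{\mathcal{S}}y$. Membership in $\inte \dom f$ follows from \cref{fact:Pleft}, and the at-most-single-valued hypothesis promotes this inclusion to the equality asserted in (ii).

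With (ii) in hand, $\overrightarrow{\CCO}_{\mathcal{S}}$ maps $\inte \dom f$ into itself and coincides there with the backward Bregman projection $\overleftarrow{\Pro}^{f}_{\aff(\mathcal{S}(\cdot))}(z)$ for any $z \in \inte \dom f \cap \left( \cap_{i \in \I} \Fix T_{i} \right)$. \cref{lemma:Fix}\cref{lemma:Fix:forward} identifies $\Fix \overrightarrow{\CCO}_{\mathcal{S}}$ with $\inte \dom f \cap \left( \cap_{i \in \I} \Fix T_{i} \right)$, which is nonempty, closed, and convex by hypothesis, so all prerequisites of \cref{theorem:conver:G} are met with $G = \overrightarrow{\CCO}_{\mathcal{S}}$ and $\mathcal{C} = \inte \dom f$. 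Parts (iii), (iv), and (v) are then immediate consequences of \cref{theorem:conver:G}\cref{theorem:conver:G:sequence}, \cref{theorem:conver:G:D}, and \cref{theorem:conver:G:limD}, respectively.

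For (vi)(a), let $(x_{k})_{k \in \mathbb{N}}$ be a sequence in $\inte \dom f$ with $(\overrightarrow{\CCO}_{\mathcal{S}}x_{k})_{k \in \mathbb{N}} \subseteq \inte \dom f$ (guaranteed by (ii)), $x_{k} \weakly x \in \inte \dom f$, and $\D_{f}(x_{k}, \overrightarrow{\CCO}_{\mathcal{S}}x_{k}) \to 0$. The equal-distance property from \cref{defn:BregCirMap} gives $\D_{f}(T_{i}x_{k}, \overrightarrow{\CCO}_{\mathcal{S}}x_{k}) = \D_{f}(x_{k}, \overrightarrow{\CCO}_{\mathcal{S}}x_{k}) \to 0$ for each $i \in \I$, so hypothesis \cref{eq:triangleineq} delivers $\D_{f}(x_{k}, T_{i}x_{k}) \to 0$. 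Since $T_{i}$ is Bregman isometric, $T_{i}x_{k} \in \inte \dom f$, and the $f$-demiclosedness of $T_{i}$ then forces $x \in \Fix T_{i}$. Intersecting over $i$ and applying \cref{lemma:Fix}\cref{lemma:Fix:forward} yields $x \in \Fix \overrightarrow{\CCO}_{\mathcal{S}}$, so $\overrightarrow{\CCO}_{\mathcal{S}}$ is $f$-demiclosed; part (vi)(b) is then a direct application of \cref{theorem:conver:G}\cref{theorem:conver:G:conv} using the demiclosedness just established and the stated cluster-point assumption.

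The main obstacle is the representation in (ii): once the forward Bregman circumcenter at $y$ is exhibited as a backward Bregman projection onto $\aff(\mathcal{S}(y))$, the whole analysis collapses to \cref{theorem:conver:G} plus the equal-distance bookkeeping in (vi). The key structural input making (ii) possible is that a common fixed point of the $T_{i}$, viewed through the Bregman isometry, is automatically $\D_{f}$-equidistant from $\mathcal{S}(y)$, which is precisely what \cref{fact:CircumOperator}\cref{fact:CircumOperator:for} requires; the at-most-single-valuedness hypothesis is what prevents (ii) from degenerating into a set-valued statement.
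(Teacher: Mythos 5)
Your proposal is correct and follows essentially the same route as the paper: part (ii) is obtained from the Bregman isometry plus \cref{fact:CircumOperator}\cref{fact:CircumOperator:for}, \cref{fact:Pleft}, and the single-valuedness hypothesis; parts (iii)--(v) and (vi)(b) are then delegated to \cref{theorem:conver:G} via \cref{lemma:Fix}\cref{lemma:Fix:forward}; and (vi)(a) uses the equal-distance property of the circumcenter together with \cref{eq:triangleineq} and the $f$-demiclosedness of each $T_{i}$, exactly as in the paper.
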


\begin{proof}
	Because  $(\forall i \in \I)$ $T_{i}$ is Bregman isometric, we know that $(\forall i \in \I)$ 
	$\left( \forall (x,y) \in \dom f \times \inte \dom f  \right)$ $\left( T_{i}x,T_{i}y \right) \in \dom f \times \inte \dom f $, and that
	\begin{align} \label{eq:Tisometry}
	(\forall i \in \I) ~\left(\forall \{x,y\}  \subseteq \dom f \times \inte \dom f \right) ~ \D_{f} \left( T_{i}x, T_{i} y   \right) = \D_{f}(x,y).
	\end{align} 
	
	\cref{theorem:Convergence:Ef}: Let  $x \in \dom f$ and $z \in \inte \dom f \cap  \left(  \cap_{i \in \I} \Fix T_{i} \right)$. Then
	\begin{align*}
	(\forall i \in \I) ~\D_{f} (x ,z) =\D_{f} (T_{i} x,z)   & \Leftrightarrow (\forall i \in \I) ~\D_{f} (x ,z) =\D_{f} (T_{i} x, T_{i}z),
	\end{align*}
	where the second equality holds by \cref{eq:Tisometry} and $z \in \cap_{j \in \I} \Fix T_{j}$.
	Hence,  $z \in \overrightarrow{E}_{f}( \mathcal{S}(x) )$ by the definition of $\overrightarrow{E}_{f}( \mathcal{S}(x) )$ presented in \cref{defn:CCS:Bregman:forward}.

	\cref{theorem:Convergence:CCSEq}: Let $z \in \inte \dom f \cap  \left(  \cap_{i \in \I} \Fix T_{i} \right) $. Because $y \in \inte \dom f$,   we see   that
	$y \in \aff \left( \mathcal{S}(y)  \right) \cap \inte \dom f \neq \varnothing$, and that, by 	\cref{theorem:Convergence:Ef} above,   $z \in     \overrightarrow{E}_{f}(\mathcal{S}(y) ) $. 
	Applying \cref{fact:CircumOperator}\cref{fact:CircumOperator:for} with $K$ replaced by $\mathcal{S}(y)$, we deduce that $ \overleftarrow{\Pro}^{f}_{\aff(\mathcal{S}(y))} z \in \overrightarrow{\CCO{}} (\mathcal{S}(y))$.
	Combine this with \cref{defn:BregCirMap},  \cref{fact:Pleft}, and the assumption that  $\overrightarrow{\CCO}_{\mathcal{S}}$ is at most single-valued,  to yield that  $\overrightarrow{\CCO}_{\mathcal{S}}y=	 \overrightarrow{\CCO{}} (\mathcal{S}(y)) =\overleftarrow{\Pro}^{f}_{\aff(\mathcal{S}(y))} z \in \inte \dom f$.

	\cref{theorem:Convergence:welldefined}$\&$\cref{theorem:Convergence:BregMonot}$\&$\cref{theorem:Convergence:sequence}: 
	By \cref{lemma:Fix}\cref{lemma:Fix:forward}, $ \Fix \overrightarrow{\CCO}_{\mathcal{S}} = \inte \dom f \cap  \left(  \cap_{i \in \I} \Fix T_{i} \right)$. Hence, \cref{theorem:Convergence:CCSEq} implies that	 	
	\begin{align*} 
	(\forall x \in   \inte \dom f )  ~( \forall z \in \Fix \overrightarrow{\CCO}_{\mathcal{S}} \cap \inte \dom f ) \quad \overrightarrow{\CCO}_{\mathcal{S}}x =\overleftarrow{\Pro}^{f}_{\aff(\mathcal{S}(x))}z \in \inte \dom f.
	\end{align*} 
	Therefore, the required results follow directly from \cref{theorem:conver:G}\cref{theorem:conver:G:sequence}$\&$\cref{theorem:conver:G:D}$\&$\cref{theorem:conver:G:limD}, respectively,  with $\mathcal{C} =\inte \dom f$ and $G= \overrightarrow{\CCO}_{\mathcal{S}}$.

	\cref{theorem:Convergence:Browder:CCS}:  Suppose that $x \in \inte \dom f$ and that $( x_{k} )_{k \in \mathbb{N}} $ is a sequence in $ \inte \dom f$ such that $x_{k} \weakly x$. Notice that, due to \cref{theorem:Convergence:CCSEq} above, 
$ \left( \overrightarrow{\CCO}_{\mathcal{S}} x_{k} \right)_{k \in \mathbb{N}} $  is a sequence in  $ \inte \dom f$. Assume that $\D_{f}(x_k, \overrightarrow{\CCO}_{\mathcal{S}}x_k) \to 0$.
In view of \cref{defn:BregCirMap},
\begin{align*} 
(\forall i \in \I) ~ (\forall k \in \mathbb{N}) \quad \D_{f} \left( x_{k} , \overrightarrow{\CCO}_{\mathcal{S}}x_k\right)   =\D_{f} \left( T_{i}x_{k} ,\overrightarrow{\CCO}_{\mathcal{S}}x_k\right),  
\end{align*}
which, connecting with $\D_{f}(x_k, \overrightarrow{\CCO}_{\mathcal{S}}x_k) \to 0$ and applying  \cref{eq:triangleineq} with $(\forall k \in \mathbb{N})$ $u_{k} =x_{k}$, implies that  $(\forall i \in \I)$  $ \D_{f} (x_{k}, T_{i}x_{k} ) \to 0 $.	
Let $i \in \I$.	Note that $T_{i}$ is $f$-demiclosed. So the results $x_{k } \weakly x$  and $ \D_{f} (x_{k}, T_{i}x_{k} ) \to 0 $ imply that $x \in \Fix T_{i}$. 
Because $i \in \I$ is chosen arbitrarily, 	by \cref{lemma:Fix}\cref{lemma:Fix:forward}, $x \in \inte \dom f \cap  \left(  \cap_{i \in \I} \Fix T_{i} \right) =\Fix \overrightarrow{\CCO}_{\mathcal{S}} $. 
	
	\cref{theorem:Convergence:Browder:Cluster}: Bearing  	\cref{theorem:Convergence:Browder:CCS} in mind, we observe that the desired convergence follows from 
	\cref{theorem:conver:G}\cref{theorem:conver:G:conv}   with $\mathcal{C} =\inte \dom f$ and $G= \overrightarrow{\CCO}_{\mathcal{S}}$.
\end{proof}

	\begin{remark} \label{remark:conditiontriangleinequality}
			We uphold assumptions in \cref{theorem:Convergence} and have a closer look at the condition \cref{eq:triangleineq}. As we mentioned before, in the classical Euclidean distance, the condition \cref{eq:triangleineq}  is immediate from  the triangle inequality of the Euclidean distance. Because generally Bregman distances do not obey the triangle inequality, \cref{eq:triangleineq} is no longer trivial if $f \neq \frac{1}{2} \norm{\cdot}$. We explain below that, even if the classical triangle inequality may not hold in a general Bregman distance, there is no need to be pessimistic.
			
	 	 Let $( u_{k})_{k \in \mathbb{N}} $ be in $ \inte \dom f$. According to \cref{defn:BregCirMap} and \cref{eq:SSx}, 
			\begin{align*}
			(\forall k \in \mathbb{N})	(\forall i \in \I) \quad		    \D_{f} (T_{i}u_{k}, \overrightarrow{\CCO}_{\mathcal{S}}u_{k} ) =\D_{f} (u_{k}, \overrightarrow{\CCO}_{\mathcal{S}} u_{k} ),
			\end{align*}
			which yields that 	\cref{eq:triangleineq} is equivalent to the following statement
			\begin{align*}  
			\D_{f} (u_{k}, \overrightarrow{\CCO}_{\mathcal{S}} u_{k} ) \to 0 \Rightarrow  (\forall i \in \I)  \D_{f} (u_{k}, T_{i}u_{k} ) \to 0.
			\end{align*}
			Combine this with  \cref{defn:BregmanDistance} to ensure that 	\cref{eq:triangleineq} holds if and only if  
			\begin{align*}
			&  f\left(u_{k}\right) -f\left(\overrightarrow{\CCO}_{\mathcal{S}} u_{k} \right) -\innp{\nabla f\left( \overrightarrow{\CCO}_{\mathcal{S}} u_{k}  \right), u_{k}-\overrightarrow{\CCO}_{\mathcal{S}} u_{k} } \to 0 \\
			\Rightarrow & (\forall i \in \I)  f\left(u_{k}\right) -f\left(T_{i}u_{k}\right) -\innp{\nabla f\left(T_{i}u_{k}\right), u_{k}-T_{i}u_{k}} \to 0.
			\end{align*}			
			Therefore, we observe that \cref{eq:triangleineq} depends on not only the function $f$ but also  the set $\mathcal{S}:= \{ \Id, T_{1}, \ldots, T_{m} \}$. For example,
			\begin{enumerate}
				\item  \label{remark:conditiontriangleinequality:Id} if $\mathcal{S} = \{ \Id \}$, then  \cref{eq:triangleineq} holds for an arbitrary function $f$  satisfying the statements of \cref{defn:BregmanDistance}; 
				\item \label{remark:conditiontriangleinequality:leq} if there exists a particular constant $\rho \in \mathbb{R}_{++}$ such that  $(\forall i \in \I) $ $(\forall k \in \mathbb{N})$ $\D_{f} (u_{k}, T_{i}u_{k} ) \leq \rho \D_{f} (u_{k}, \overrightarrow{\CCO}_{\mathcal{S}} u_{k} )$,   then \cref{eq:triangleineq} is also satisfied. 
			\end{enumerate}
		 	
	\end{remark}

 	Notice that if we can find a function $f \neq \frac{1}{2} \norm{\cdot}^{2}$  such that  $(\forall \{x,y,z\} \subseteq \mathcal{H})$ $D_{f}(x, y) \leq D_{f}(x, z) +D_{f}(z, y) $, then we obtain   \cref{eq:triangleineq} immediately. (We believe such functions actually have a   wide range of applications in various areas.)
 Based on our explanations in \cref{remark:conditiontriangleinequality}, even if we cannot find such beautiful functions, there is a large probability that there exist some special functions $f$ together with appropriate sets $\mathcal{S}:= \{ \Id, T_{1}, \ldots, T_{m} \}$ (the set might be dependent on the corresponding $f$) such that \cref{eq:triangleineq} is satisfied. 
 
 Given a function $f$ and a set $\mathcal{S}$, there may be infinitely many sequences $( u_{k})_{k \in \mathbb{N}} $ in $ \inte \dom f$ satisfying $ \D_{f} (u_{k}, \overrightarrow{\CCO}_{\mathcal{S}} u_{k} ) \to 0 $ and $
 (\forall i \in \I) $ $ \D_{f} (T_{i}u_{k}, \overrightarrow{\CCO}_{\mathcal{S}}u_{k} ) \to 0  $, so the workload of verifying \cref{eq:triangleineq}	 might be really large if we don't use tricks like \cref{remark:conditiontriangleinequality}\cref{remark:conditiontriangleinequality:leq} or some better ones. The following result demonstrates that to show the weak convergence of  forward Bregman circumcenter methods, 
the condition \cref{eq:triangleineq} might be unnecessary.

\begin{theorem} \label{theorem:xkConvergence}
Suppose that $f$ is Legendre,   that 	$\inte \dom f \cap  \left(  \cap_{i \in \I} \Fix T_{i} \right)$ is nonempty, that $ \left(  \cap_{i \in \I} \Fix T_{i} \right)$ is  closed and convex,  that $(\forall i \in \I)$ $T_{i}$ is Bregman isometric w.r.t.\,$f$, 
and that $\overrightarrow{\CCO}_{\mathcal{S}}$ is at most single-valued on $\inte \dom f$. Let $x_{0} \in \inte \dom f$. 
Set $(\forall k \in \mathbb{N})$ $x_{k+1} = \overrightarrow{\CCO}_{\mathcal{S}}x_{k}$.
Then the following assertions hold.
\begin{enumerate}
	\item   \label{theorem:xkConvergence:welldefined} $(x_{k})_{k \in \mathbb{N}}$ is a well-defined sequence in $\inte \dom f$.
	\item \label{theorem:xkConvergence:BregMonot} $(x_{k})_{k \in \mathbb{N}}$  is forward Bregman  monotone with respect to $     \cap_{i \in \I} \Fix T_{i}  $. Consequently, $	( \forall z \in \inte \dom f \cap  \left(  \cap_{i \in \I} \Fix T_{i} \right) )$ $ \left(  \D_{f} \left( x_{k} ,z \right)  \right)_{k \in \mathbb{N}}$ converges.  
	\item \label{theorem:xkConvergence:lim} $\lim_{k \to \infty}$ $ \D_{f} \left(  x_{k}, \overrightarrow{\CCO}_{\mathcal{S}}x_{k}  \right) =0$.
	
	\item \label{theorem:xkConvergence:weakconver} Suppose that   $(\forall i \in \I)$    $T_{i}$ is $f$-demiclosed,  that  all weak sequential cluster   points of $(x_{k})_{k \in \mathbb{N}}$ lie in $ \inte \dom f$, and that 
	\begin{align} \label{eq:triangleineq:update}
	\D_{f} \left(x_{k}, \overrightarrow{\CCO}_{\mathcal{S}} x_{k} \right) \to 0 \Rightarrow  (\forall i \in \I)  \D_{f} (x_{k}, T_{i}x_{k} ) \to 0.
	\end{align}
	Then   $(x_{k})_{k \in \mathbb{N}}$ weakly converges to some point in $\inte \dom f \cap  \left(  \cap_{i \in \I} \Fix T_{i} \right) $.
\end{enumerate}
\end{theorem} 

\begin{proof}
\cref{theorem:xkConvergence:welldefined}$\&$\cref{theorem:xkConvergence:BregMonot}$\&$\cref{theorem:xkConvergence:lim}: These results are clear from \cref{theorem:Convergence}\cref{theorem:Convergence:welldefined}$\&$\cref{theorem:Convergence:BregMonot}$\&$\cref{theorem:Convergence:sequence}, respectively.

\cref{theorem:xkConvergence:weakconver}: According to \cref{theorem:xkConvergence:BregMonot}  above and \cref{theorem:ConvBregBackwMonot}\cref{theorem:ConvBregBackwMonot:conv},   it suffices to prove that all weak sequential cluster points of $(x_{k})_{k \in \mathbb{N}}$ are in $  \cap_{i \in \I} \Fix T_{i}   $.

Suppose that a subsequence $(x_{k_{j}})_{j \in \mathbb{N}}$  of $(x_{k})_{k \in \mathbb{N}}$ weakly converges to $\bar{x} \in  \inte \dom f$.  Taking \cref{theorem:xkConvergence:lim} above and \cref{eq:triangleineq:update} into account, we observe that $(\forall i \in \I) $ $ \D_{f} (x_{k}, T_{i}x_{k} ) \to 0$, which ensures that $(\forall i \in \I) $ $ \D_{f} (x_{k_{j}}, T_{i}x_{k_{j}} ) \to 0$. Now combine the results,  $x_{k_{j}} \weakly \bar{x}$ and $(\forall i \in \I) $ $ \D_{f} (x_{k_{j}}, T_{i}x_{k_{j}} ) \to 0$, with the assumption  that   $(\forall i \in \I)$    $T_{i}$ is $f$-demiclosed to necessitate that $\bar{x} \in    \cap_{i \in \I} \Fix T_{i} $. 

Altogether, the proof is done. 
\end{proof}

\begin{remark}
	\begin{enumerate}
		\item Note that  \cref{eq:triangleineq} is clearly stronger than \cref{eq:triangleineq:update} and that  \cref{eq:triangleineq} is implied by the triangle inequality of the Euclidean distance. So \cref{eq:triangleineq:update} is trivial in the Euclidean distance as well. 
 	\item  \label{remark:conditiontriangleinequality:finiteconvergence} Suppose that there exists $K \in \mathbb{N}$ such that $x_{K} \in \cap_{i \in \I} \Fix T_{i}$. Then, via \cref{lemma:Fix}\cref{lemma:Fix:forward},  
		\begin{align*}
		x_{K} \in  \inte \dom f \cap  \left(  \cap_{i \in \I} \Fix T_{i} \right) = \Fix \overrightarrow{\CCO}_{\mathcal{S}}.
		\end{align*}
		Hence, 
		\begin{align*}
		(\forall i \in \I) (\forall k \geq K) \quad \D_{f} (x_{k}, \overrightarrow{\CCO}_{\mathcal{S}} x_{k} ) = \D_{f} (T_{i}x_{k}, \overrightarrow{\CCO}_{\mathcal{S}}x_{k} )=  \D_{f} (x_{k}, T_{i}x_{k} ) =0,
		\end{align*}
		which guarantees that if the forward Bregman circumcenter method converges to a point in $\inte \dom f \cap  \left(  \cap_{i \in \I} \Fix T_{i} \right) $ in finitely many steps, then   \cref{eq:triangleineq} holds.
		
			Note that the finite convergence of the circumcenter method under the Euclidean distance is not a big surprise (for details, see e.g., \cite{BBCS2020ConvexFeasibility}, \cite{BBCS2020CRMbetter} and \cite{Ouyang2020Finite}). Moreover, the following \cref{example:CCS} also shows the  one-step convergence of a forward Bregman circumcenter method under a general Bregman distance.   
	\end{enumerate}
\end{remark}

We leave a systematic study on the conditions  \cref{eq:triangleineq} and \cref{eq:triangleineq:update} as future work.

To end this work, we revisit the operator $T$ and the function $f$ in  \cref{prop:FDE}, which satisfy all  requirements in \cref{theorem:xkConvergence}.
In view of \cref{prop:FDE}, the following example illustrates \Cref{theorem:Convergence,theorem:xkConvergence} and  demonstrates that the forward Bregman circumcenter method finds the desired fixed point with one iterate.   
\begin{example} \label{example:CCS}
	Suppose that $\mathcal{H} =\mathbb{R}^{n}$. Define $f :x \mapsto   \sum^{n}_{i=1} x_{i} \ln  (x_{i}) + (1-x_{i} ) \ln(1-x_{i} )$, with $\dom f =\left[0, 1\right]^{n} $, which is the Fermi-Dirac entropy. 
	Let $\Lambda$ be a subset of $\J := \{ 1, \ldots,n  \}$. 
	Define $T: \mathbb{R}^{n} \to \mathbb{R}^{n}$ by $\left( \forall x  \in \mathbb{R}^{n} \right)$ $T x := (y_{i})^{n}_{i=1}$ where  $(\forall i \in \J \smallsetminus \Lambda )$ $ y_{i} =x_{i}$  and $(\forall i \in   \Lambda )$ $y_{i} = 1-x_{i}$.  Set $\mathcal{S} := \{ \Id , T\} $.
	 Let $x := (x_{i})_{i \in \J} \in \left[0, 1\right]^{n}$.  Denote by $\Phi := \{ i\in \J ~:~ x_{i} \neq \frac{1}{2} \}$.
	Then the following statements hold. 
\begin{enumerate}
	\item  \label{example:CCS:Ef} $\overrightarrow{E}_{f}( \mathcal{S}(x)) = \left\{ (p_{i})_{i \in \J}  \in \left]0, 1\right[^{n} ~:~ 0=\sum_{i \in \Lambda} (2x_{i} -1) \ln \left( \frac{ p_{i}}{ 1-p_{i}} \right)  \right\}$.
	\item \label{example:CCS:p} $	\overrightarrow{\CCO}_{\mathcal{S}} (x) = \left\{ (p_{i})_{i \in \J}  \in \Big( \left]0, 1\right[^{n} \cap \aff \{ x, 1-x\}  \Big) ~:~ 0=\sum_{i \in \Lambda}  (2x_{i} -1) \ln \left( \frac{ p_{i}}{ 1-p_{i}} \right)  \right\}$. Moreover,  
	\begin{align*}
		& \left\{ (p_{i})_{i \in \J}  \in \Big( \left]0, 1\right[^{n} \cap \aff \{ x, 1-x\}  \Big) ~:~  (\forall i \in \Lambda \smallsetminus \Phi) p_{i} =\frac{1}{2} \right\} \subseteq \overrightarrow{\CCO}_{\mathcal{S}} (x);\\
&\varnothing \neq  \left\{ (p_{i})_{i \in \J}  \in \Big( \left]0, 1\right[^{n} \cap \aff \{ x, 1-x\}  \Big) ~:~ (\forall i \in \Lambda) p_{i} =\frac{1}{2} \right\} \subseteq	\Fix T \cap \inte \dom f  \cap 	\overrightarrow{\CCO}_{\mathcal{S}} (x).
	\end{align*}
	\item \label{example:CCS:Phi} Suppose that $(\forall i \in \Lambda \smallsetminus \Phi)$ $x_{i} > \frac{1}{2}$ or  $(\forall i \in \Lambda \smallsetminus \Phi)$ $x_{i}  < \frac{1}{2}$. Then 
		\begin{align*}
 \overrightarrow{\CCO}_{\mathcal{S}} (x) = \left\{ (p_{i})_{i \in \J}  \in \Big( \left]0, 1\right[^{n} \cap \aff \{ x, 1-x\}  \Big) ~:~  (\forall  i \in \Lambda \smallsetminus \Phi) p_{i} =\frac{1}{2}   \right\}.
	\end{align*}
		\item \label{example:CCS:PhiFix}Suppose that $(\forall i \in \Lambda \smallsetminus \Phi)$ $x_{i} > \frac{1}{2}$ or  $(\forall i \in \Lambda \smallsetminus \Phi)$ $x_{i}  < \frac{1}{2}$, and that $\Lambda \cap \Phi = \varnothing$. Then 
			\begin{align*}
 	\overrightarrow{\CCO}_{\mathcal{S}} (x) = \left\{ (p_{i})_{i \in \J}  \in \Big( \left]0, 1\right[^{n} \cap \aff \{ x, 1-x\}  \Big) ~:~  (\forall  i \in \Lambda ) p_{i} =\frac{1}{2}   \right\} \subseteq \Fix T \cap \inte \dom f.
		\end{align*}
		\item   \label{example:CCS:singleton} Suppose that $\Lambda $ is a singleton, say $\Lambda := \{i_{0}\}$. Then if $x_{i_{0}} =\frac{1}{2}$, then $\overrightarrow{\CCO}_{\mathcal{S}} (x) =  \left]0, 1\right[^{n} \cap \aff \{ x, 1-x\}  $; if $x_{i_{0}} \neq \frac{1}{2}$, then 
	\begin{align*}
 	\overrightarrow{\CCO}_{\mathcal{S}} (x) = \left\{ (p_{i})_{i \in \J}  \in  \left]0, 1\right[^{n} \cap \aff \{ x, 1-x\}   ~:~ p_{i_{0}} =  \frac{1}{2}  \right\}  \subseteq   \Fix T \cap \inte \dom f.
	\end{align*}
	\item  \label{example:CCS:Ef:R} Suppose that $\mathcal{H} =\mathbb{R}$. Then $	\left(\forall x  \in  \left[0, 1\right]  \right) $ $ \overrightarrow{\CCO}_{\mathcal{S}}(x) = \left\{ \frac{1}{2} \right\} = \inte \dom f \cap \Fix T$.
\item \label{example:CCS:Ef:R2} Suppose that $\mathcal{H} =\mathbb{R}^{2}$. Let $x=(\frac{1}{4}, \frac{5}{6})$. Then $ \overrightarrow{\CCO}_{\mathcal{S}}(x) = \left\{ \frac{1}{2} \right\} = \inte \dom f \cap \Fix T$.
\end{enumerate}	
\end{example}

 \begin{proof}
 	\cref{example:CCS:Ef}: This is easy from the related definitions and \cref{eq:prop:FDE} in the proof of \cref{prop:FDE}.
 	
 	\cref{example:CCS:p}: Based on \cref{prop:FDE}\cref{prop:FDE:Ti},
 	\begin{align*}
 	\Fix T \cap \inte \dom f = \left\{ x \in  \left]0, 1\right[^{n}~:~  (\forall i \in \Lambda)   x_{i} = \frac{1}{2}  \right\}.
 	\end{align*}
 	Hence, this is clear from \cref{defn:BregCirMap}.
 	
  \cref{example:CCS:Phi}: This follows immediately from \cref{example:CCS:p}.
 	
 \cref{example:CCS:PhiFix}$\&$\cref{example:CCS:singleton}$\&$\cref{example:CCS:Ef:R}: 
 The required results follows easily from \cref{example:CCS:Phi}.
 
 \cref{example:CCS:Ef:R2}: This is from \cref{example:CCS:p}  and some basic calculus. 
 \end{proof}

  Searching more particular  examples of   sets $\mathcal{S}= \{\Id, T_{1}, \ldots, T_{m} \} $  satisfying the requirements in \Cref{theorem:Convergence,theorem:xkConvergence}   under general Bregman distances associated with $f \neq \frac{1}{2} \norm{\cdot}^{2}$  is left as our  future work.

	\section*{Acknowledgements}
The author sincerely appreciates the useful comments from the associate editor and two anonymous referees, which were very helpful in improving the exactness and presentation of this paper. The author also would like to thank Dr.\,Xianfu Wang, who carefully read the original draft of this work and provided kind comments for the improvements.

\addcontentsline{toc}{section}{References}

\bibliographystyle{abbrv}

\end{document}